\theoremstyle{plain} \numberwithin{equation}{section}
\newtheorem{thm}{Theorem}[section]
\newtheorem{theorem}[thm]{Theorem}
\newtheorem{lemma}[thm]{Lemma}
\newtheorem{corollary}[thm]{Corollary}
\newtheorem{example}[thm]{Example}
\newtheorem{definition}[thm]{Definition}
\begin{document}
\setcounter{page}{1}

\title[Some properties of factor set in regular Hom-Lie algebras]{Some properties of factor set in regular Hom-Lie algebras}

\author[Padhan]{Rudra Narayan Padhan}
\address{Department of Mathematics, National Institute of Technology  \\
         Rourkela,
          Odisha-769028 \\
                India}
\email{rudra.padhan6@gmail.com}
\author[Nandi]{Nupur Nandi}
\address{Department of Mathematics, National Institute of Technology,  \\
         Rourkela, 
          Odisha-769028 \\
           India}
\email{nupurnandi999@gmail.com}
\author[Pati]{K. C Pati}
\address{Department of Mathematics, National Institute of Technology  \\
         Rourkela, 
          Odisha-769028 \\
                India}
\email{kcpati@nitrkl.ac.in}

\subjclass[2010]{Primary 17B30; Secondary 17B05.}
\keywords{ Hom-Lie algebra; Isoclinism; Factor set }
\maketitle
\begin{abstract}
 In this paper, we give the definition of isoclinism for regular Hom-Lie algebras and verify some of its properties. Finally, we introduce the factor set and show that the isoclinism and isomorphism of two finite same dimensional regular Hom-Lie algebras are equivalent.
\end{abstract}

\section{Introduction}
Hom-Lie algebras have been investigated widely for the last few years. Actually, the Hom-Lie algebras were first studied by Hartwig, Larsson and Silvestrov in [4] as part of a study of deformations of the Witt and the Virasoro algebras. There were some successive contributions on Hom-Lie algebras in [1,2,8]. A Hom-Lie algebra is a triple $(V,[.,.],\varphi)$ in which the bracket satisfies a twisted Jacobi identity along the linear map $\varphi$. It should be pointed out that Lie algebras form a particular case of Hom-Lie algebras, i.e. when $\varphi$ equals to the identity map.
\par
 Philip Hall introduced isoclinism for groups in 1940. Then in 1994 Kay Moneyhun derived a its Lie algebra version [5,6]. The concept of factor set plays a dominant role in the studies of isoclinism, e.g. in finite dimensional case, isoclinism and isomorphism are equivalent. Thus we wonder whether the same is true for Hom-Lie algebras or not, but eventually the answer is yes for regular Hom-Lie algebras. Although the ideas followed the work [5,7,9], several important results pertaining particularly to the case of regular Hom-Lie algebras are proved. In future, we hope these results would be helpful to study the stem cover and Schur multiplier of Hom-Lie algebras. Already some works have been done for central extension and Cohomology on Hom-Lie algebras [11,12]. Thus it will be of most interest to extend Lie algebras results to that of regular Hom-Lie algebras.
\par
Throughout this paper all algebras and vector spaces are considered over $\mathbb{F}$, a field of characteristic $0$.

\begin{definition}
	A Hom-Lie algebra is a triple $(V,[.,.],\varphi)$ consisting of a vector space $V$, a skew-bilinear map $[.,.]:V\times V\rightarrow V$ and a linear map $\varphi:V\rightarrow V$ that satisfies Hom-Jacobi~ identity i.e.
	
	\[ [\varphi(v_1),[v_2,v_3]]+[\varphi(v_2),[v_3,v_1]]+[\varphi(v_3),[v_1,v_2]]=0,~\forall~ v_1,v_2,v_3\in V \]
\end{definition}
\noindent Let $(V,[.,.],\varphi)$ be a Hom-Lie algebra. A Hom-Lie subalgebra of $(V,[.,.],\varphi)$ is a vector subspace $W$ of $V$, which is closed by the bracket and is invarient by $\varphi$, i.e.
\begin{enumerate}
	\item $[w_1,w_2]\in W~ \forall~w_1,w_2\in W$
	\item $\varphi(w) \subseteq W~\forall ~w\in W$
\end{enumerate}
A Hom-Lie subalgebra of $(V,[.,.],\varphi)$ is said to be a ideal if $[w,v]\in W$ for all $w\in W,v\in V$.\\
The centre of a Hom-Lie algebra $(V,[.,.],\varphi)$ is defined by $$Z(V)=\{x\in V: [x,v]=0~\forall~ v\in V\}.$$
 An abelian Hom-Lie algebra is a vector space $V$ endowed with trivial
bracket and any linear map $\varphi: V\rightarrow V$.

A Hom-Lie algebra $(V,[.,.],\varphi)$ is said to be $multiplicative$ if  $\varphi([v_1,v_2])=[\varphi(v_1),\varphi(v_2)]$ for all $v_1,v_2\in V$. A multiplicative Hom-Lie algebra $(V,[.,.],\varphi)$ is said to be $regular$ if $\varphi$ is bijective. It should be noted that $Z(V)$ is not always an ideal of $(V,[.,.],\varphi)$.\\

\begin{lemma}
If $(V,[.,.],\varphi)$ is regular Hom-Lie algebra, then $Z(V)$ is an ideal of $V$.
\end{lemma}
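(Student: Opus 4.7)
The plan is to verify the three conditions defining an ideal: $Z(V)$ is a subspace, it is closed under the bracket, it is stable under $\varphi$, and it satisfies $[z,v]\in Z(V)$ for all $z\in Z(V)$, $v\in V$.

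First I would observe that $Z(V)$ is evidently a linear subspace of $V$ by bilinearity of the bracket. The closure under the bracket and the ideal condition itself are then both trivial, because for any $z,z_1,z_2\in Z(V)$ and any $v\in V$ we have $[z_1,z_2]=0\in Z(V)$ and $[z,v]=0\in Z(V)$ directly from the definition of the centre. So the only genuine content of the lemma is the $\varphi$-invariance of $Z(V)$, which is exactly the point where the regularity hypothesis is used (and which, as the remark preceding the lemma indicates, can fail in the non-regular setting).

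To prove $\varphi(Z(V))\subseteq Z(V)$, I would fix $z\in Z(V)$ and an arbitrary $v\in V$ and show $[\varphi(z),v]=0$. Since $\varphi$ is bijective, there exists $w\in V$ with $\varphi(w)=v$. Using multiplicativity of $\varphi$ and the fact that $z\in Z(V)$, I compute
\[
[\varphi(z),v]=[\varphi(z),\varphi(w)]=\varphi([z,w])=\varphi(0)=0,
\]
so $\varphi(z)\in Z(V)$, as required.

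There is no real obstacle here; the argument is a direct unwinding of the definitions. The only thing one has to see is how the two parts of the regularity assumption combine: multiplicativity lets one pull $\varphi$ outside the bracket, while surjectivity of $\varphi$ lets one represent an arbitrary test element $v\in V$ as $\varphi(w)$, which is exactly what is needed to convert $[z,w]=0$ into $[\varphi(z),v]=0$. Without surjectivity one could not reach every $v\in V$, and without multiplicativity one could not move $\varphi$ across the bracket, so both pieces of the regularity hypothesis are essential.
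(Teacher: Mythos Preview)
Your proof is correct and, for the essential $\varphi$-invariance step, identical to the paper's argument (writing $v=\varphi(w)$ via surjectivity and then using multiplicativity). Your treatment of the subalgebra condition is in fact cleaner than the paper's: you observe that $[z_1,z_2]=0\in Z(V)$ directly from the definition of the centre, whereas the paper shows $[[x,y],v]=0$ by expanding the Hom--Jacobi identity, which is unnecessary since the left-hand side already vanishes.
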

\begin{proof}
Let $x \in Z(V)$, then for any $v \in V$ there exists $v' \in V$ such that $\varphi (v')=v$. Now \[ [\varphi(x),v]= [\varphi(x),\varphi(v')]=\varphi[x,v']=0, \]
which implies $\varphi(x)\in Z(V)$. Also for $x,y \in Z(V)$, then 
\[[[x,y],v]=[[x,y],\varphi(v')]=-[[y,v'],\varphi(x)]-[[v',x],\varphi(y)]=0.\]
Thus, $[x,y] \in Z(V)$ and $Z(V)$ is an ideal.

\end{proof}
The derived subalgebra of a Hom-Lie algebra $(V,[.,.],\varphi)$ is the subspace of $V$ generated by the elements of the form $[v_1,v_2]$ with $v_1,v_2 \in V$ and we denoted it by $V'$.
 A regular Hom-Lie algebra $(V,[.,.],\varphi)$ is called stem Hom-Lie algebra whenever $Z(V)\subseteq V^\prime$.

\begin{definition}
	Let  $(V,[.,.]_1,\varphi _1)$ and  $(W,[.,.]_2,\varphi_2)$ be two Hom-Lie algebras. A linear map $f: V\rightarrow W$ is a Hom-Lie algebra morphism if for all $v_1,v_2\in V$, $f([v_1,v_2]_{1})=[f(v_1),f(v_2)]_{2}$ and $ f \circ \varphi_1=\varphi_2\circ f$. In other words, the following diagram commutes:
	\begin{center}
		\begin{tikzpicture}[>=latex]
		\node (x) at (0,0) {\(V\)};
		\node (z) at (0,-2) {\(V\)};
		\node (y) at (2,0) {\(W\)};
		\node (w) at (2,-2) {\(W\)};
		\draw[->] (x) -- (y) node[midway,above] {$f$};
		\draw[->] (x) -- (z) node[midway,left] {$\varphi_1$};
		\draw[->] (z) -- (w) node[midway,below] {$f$};
		\draw[->] (y) -- (w) node[midway,right] {$\varphi_2$};
		\end{tikzpicture}\\
	\end{center}
	In particular, they are isomorphic if $ f$ is a bijective linear map.
\end{definition}

\begin{lemma}
Let $f:(V,[.,.]_1,\varphi _1)\longrightarrow (W,[.,.]_2,\varphi_2)$ be an isomorphism. If $(V,[.,.]_1,\varphi _1)$ is regular, then $(W,[.,.]_2,\varphi _2)$ is also.

\end{lemma}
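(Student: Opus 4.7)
The plan is to unpack what it means for $W$ to be regular and check the two required properties by transporting them from $V$ via the isomorphism $f$. Since $V$ is regular, $\varphi_1$ is bijective and satisfies $\varphi_1([v_1,v_2]_1)=[\varphi_1(v_1),\varphi_1(v_2)]_1$. I need to verify the same two properties for $\varphi_2$ on $W$.

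First I would exploit the commuting square from the definition of a Hom-Lie morphism, namely $\varphi_2\circ f = f\circ\varphi_1$. Because $f$ is bijective, this rearranges to $\varphi_2 = f\circ\varphi_1\circ f^{-1}$. Bijectivity of $\varphi_2$ then follows immediately as a composition of three bijections.

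Next I would verify multiplicativity of $\varphi_2$. Given any $w_1,w_2\in W$, surjectivity of $f$ lets me pick $v_1,v_2\in V$ with $f(v_i)=w_i$. Then a direct chain of equalities, using in turn that $f$ preserves brackets, that $f\circ\varphi_1=\varphi_2\circ f$, and that $\varphi_1$ is multiplicative on $V$, gives
\[
\varphi_2([w_1,w_2]_2)=\varphi_2(f([v_1,v_2]_1))=f(\varphi_1([v_1,v_2]_1))=f([\varphi_1(v_1),\varphi_1(v_2)]_1),
\]
and then pushing $f$ inside the bracket and using $f\circ\varphi_1=\varphi_2\circ f$ once more yields $[\varphi_2(w_1),\varphi_2(w_2)]_2$.

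There is no real obstacle here: the only thing to be careful with is consistently using both defining properties of the morphism $f$ (compatibility with brackets \emph{and} compatibility with the twisting maps), together with the bijectivity of $f$ to reduce every statement about elements of $W$ to a statement about elements of $V$. Since $V$ is assumed multiplicative and $\varphi_1$ bijective, both required properties of $\varphi_2$ drop out, and $(W,[.,.]_2,\varphi_2)$ is regular.
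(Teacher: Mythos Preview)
Your proof is correct and follows essentially the same route as the paper: the paper also picks preimages $v_1,v_2$ of $w_1,w_2$ under $f$ and runs the same chain of equalities to get multiplicativity, then invokes bijectivity of $f$ and $\varphi_1$ to conclude $\varphi_2$ is bijective. Your formulation $\varphi_2=f\circ\varphi_1\circ f^{-1}$ makes the bijectivity step slightly more explicit, but there is no substantive difference.
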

\begin{proof}
 Let $w_1,w_2 \in W$, then there exists $v_1,v_2 \in V$ such that $f(v_1)=w_1, f(v_2)=w_2$.   Together with $f \varphi_1=\varphi_2 f$ and $\varphi_1$ is regular , we have
\[\varphi_{2}[w_1,w_2]_2=\varphi_{2}([f(v_1),f(v_2)]_2)=\varphi_{2} f([v_1,v_2]_1)=f \varphi_{1}([v_1,v_2]_1)=f([\varphi_1(v_1),\varphi_1(v_2)]_1)=[f\varphi_1(v_1),f\varphi_1(v_2)]_2\]
\[=[\varphi_2 f (v_1), \varphi_2 f (v_2)]_2=[\varphi_2(w_1),\varphi_2(w_2)]_2.\]

\noindent Now $f$ and $\varphi$ are bijective, therefore $\varphi_2$ . Thus, $(W,[.,.],\varphi _2)$ is regular. 
\end{proof}
For any ideal $I$ of $(V,[.,.],\varphi )$, we can naturally define the quotient Hom-Lie algebra on the quotient vector space $V/I$ by defining; $[.,.]:V/I \times V/I \longrightarrow V/I$ by \[[\overline{v_1},\overline{v_2}]=\overline{[v_1,v_2]},~\forall~\overline{v_1},\overline{v_2}\in V/I  \]
and $\tilde{\varphi}:V/I\rightarrow V/I$ is naturally induced by $\varphi$ i.e.
\[\tilde{\varphi}(\overline{v})=\varphi(v)+I.\]
Then with this skew-bilinear and linear map, $(V/I,[.,.],\tilde{\varphi})$ is a Hom-Lie algebra and we called it the quotient Hom-Lie algebra.\\

Let  $(V,[.,.]_1,\varphi _1)$ and  $(W,[.,.]_2,\varphi_2)$ be two Hom-Lie algebras. Then we can define the direct sum of these Hom-Lie algebras by defining;
\[[(v_1,w_1),(v_2,w_2)]:=([v_1,v_2],[w_1,w_2]) \]
and $\psi : V\oplus W \longrightarrow V\oplus W$ by
\[\psi (v,w)=(\varphi_{1}(v),\varphi_{2}(w)).\]
Then $(V\oplus W,[.,.],\psi)$ is a Hom-Lie algebra.\\

\noindent  In section-2, the definition of isoclinism is given and we prove some of its properties for regular Hom-Lie algebra. In section-3, we introduce factor set in regular Hom-Lie algebra. Moreover, we show the existence of factor set for any regular Hom-Lie algebra and prove that the two finite dimensional regular Hom-Lie algebras are isoclinic if and only if they are isomorphic.

\section{Isoclinism}

 We can only define the isoclinism for regular Hom-Lie algebra as isoclinism can not be defined for an arbitrary Hom-Lie algebra. We shall like to mention for Hom-Lie algebra which is not a regular one, $Z(V)$ will not be an ideal of $V$ and we can not define a quotient algebra $V/Z(V)$ which is required for the definition of isoclinism.

\begin{definition}
Let $(V,[.,.]_1,\varphi_1)$ and $(W,[.,.]_2,\varphi_2)$ be two regular Hom-Lie algebras, $\alpha: \frac{V}{Z(V)}\longrightarrow \frac{W}{Z(W)}$ and $\beta :V' \longrightarrow W'$ be Hom-Lie algebra homomorphisms such that the following diagram is commutative,
\begin{center}
\begin{tikzpicture}[>=latex]
\node (x) at (0,0) {\(\frac{V}{Z(V)}\times \frac{V}{Z(V)} \)};
\node (z) at (0,-2) {\(\frac{W}{Z(W)}\times \frac{W}{Z(W)}\)};
\node (y) at (3,0) {\(V'\)};
\node (w) at (3,-2) {\(W'\)};
\draw[->] (x) -- (y) node[midway,above] {$\mu$};
\draw[->] (x) -- (z) node[midway,left] {$\alpha ^{2} $};
\draw[->] (z) -- (w) node[midway,below] {$\rho$};
\draw[->] (y) -- (w) node[midway,right] {$\beta$};
\end{tikzpicture}\\
 \end{center} 
where $\mu( (\overline{v_{1}},\overline{v_{2}})):=[\overline{v_{1}},\overline{v_{2}}]_1$ for $ v_1,v_2 \in V $ and similarly  $\rho( (\overline{w_{1}},\overline{w_{2}})):= [\overline{w_{1}},\overline{w_{2}}]_2$ where $w_{1},w_ {2} \in W$. Then the pair $(\alpha, \beta)$ is called {\it homoclinism} and if they are both isomorphisms, then $(\alpha, \beta)$ is called {\it isoclinism}.
\end{definition}
 If $(\alpha, \beta)$ is an isoclinism between $(V,[.,.]_1,\varphi_1)$ and $(W,[.,.]_2,\varphi_2)$, then $(V,[.,.]_1,\varphi_1)$ and $(W,[.,.]_2,\varphi_2)$ are said to be isoclinic, which is denoted by $V\sim W$. We observe that isoclinism is an equivalence relation. 
\begin{lemma}
	If $(V,[.,.]_1,\varphi_1)$ is a regular Hom-Lie algebra and $(W,[.,.]_2,\varphi_2)$ be an abelian Hom-Lie algebra, then $V\sim V\oplus W$.
\end{lemma}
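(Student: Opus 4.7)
The plan is to exhibit explicit isomorphisms $\alpha$ and $\beta$ that witness the isoclinism $V \sim V \oplus W$. First I would exploit that $W$ is abelian to simplify the relevant data of $V \oplus W$. Because $[w_1,w_2]_2 = 0$ for all $w_1, w_2 \in W$, the bracket on $V \oplus W$ reduces to $[(v_1,w_1),(v_2,w_2)] = ([v_1,v_2]_1,\,0)$, so $(V \oplus W)' = V' \oplus \{0\}$. A short direct check then yields $Z(V \oplus W) = Z(V) \oplus W$, so as vector spaces $(V \oplus W)/Z(V \oplus W) \cong V/Z(V)$.

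Next I would define the candidate isoclinism by $\alpha : V/Z(V) \to (V \oplus W)/Z(V \oplus W),\ \bar v \mapsto \overline{(v,0)}$ and $\beta : V' \to (V \oplus W)',\ x \mapsto (x,0)$. Well-definedness of $\alpha$ follows from $Z(V) \times \{0\} \subseteq Z(V \oplus W)$; surjectivity from $\overline{(v,w)} = \overline{(v,0)}$ (since $(0,w) \in Z(V \oplus W)$); and injectivity from the identification $Z(V \oplus W) = Z(V) \oplus W$. The map $\beta$ is visibly a linear bijection onto $V' \oplus \{0\} = (V \oplus W)'$.

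The routine verifications are that $\alpha$ and $\beta$ respect both the brackets and the twist maps: for $\beta$ this is transparent from the formula for the bracket on $V \oplus W$, while for $\alpha$ one computes $\alpha(\tilde\varphi_1(\bar v)) = \overline{(\varphi_1(v),0)} = \tilde\psi(\overline{(v,0)}) = \tilde\psi(\alpha(\bar v))$, using $\psi(v,w) = (\varphi_1(v),\varphi_2(w))$. Finally, commutativity of the isoclinism diagram reduces to
\[ \beta \circ \mu(\bar v_1, \bar v_2) \;=\; ([v_1,v_2]_1,\,0) \;=\; \rho \circ \alpha^{2}(\bar v_1, \bar v_2), \]
which is immediate from the two definitions.

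No step poses a genuine obstacle; the main care lies in the bookkeeping needed to establish $Z(V \oplus W) = Z(V) \oplus W$ and in verifying that $\alpha$ intertwines the induced twisting maps on the quotients, which relies on the regularity of $\varphi_1$ so that $\tilde\varphi_1$ is defined on $V/Z(V)$. One should also implicitly take $\varphi_2$ to be bijective, so that $V \oplus W$ is itself a regular Hom-Lie algebra and the notion of isoclinism is meaningful in this setting.
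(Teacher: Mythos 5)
Your proposal is correct and follows essentially the same route as the paper: identify $Z(V\oplus W)=Z(V)\oplus W$, take $\alpha(\bar v)=\overline{(v,0)}$ and $\beta$ the obvious identification $V'\cong V'\oplus\{0\}=(V\oplus W)'$, and check the intertwining with the twist maps and the commutativity of the isoclinism square. Your closing remark that $\varphi_2$ must be taken bijective for $V\oplus W$ to be regular is a worthwhile observation that the paper leaves implicit.
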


\begin{proof}
	
	Let $(V\oplus W,\psi)$ be the Hom-Lie algebra. As $W$ is abelian, $Z(V\oplus W)=Z(V)\oplus W$. Then $(\frac{V\oplus W}{Z(V)\oplus W},\tilde{\psi})$ and $(V/Z(V),\tilde{\varphi_1})$ are the quotient Hom-Lie algebra. Define the map $$\alpha :\frac{V}{Z(V)}\rightarrow \frac{V\oplus W}{Z(V)\oplus W}$$ by $$v+Z(V)\rightarrow (v,0)+(Z(V)\oplus W) $$ for all $v\in V$. It is easy to verify that $\alpha$ is well defined bijection map and for $v_1,v_2\in V$, $\alpha([v_1+Z(V),v_2+Z(V)])=[\alpha(v_1+Z(V)),\alpha(v_2+Z(V))]$ holds, implies that $\alpha$ is a homomorphism. Also we will check now that $\alpha \tilde{\varphi_1}= \tilde{\psi} \alpha$, i.e. 
	\[ \alpha \tilde{\varphi_1}(\overline{v})=\alpha (\varphi (v)+Z(V))=(\varphi(v),0)+ (Z(V)\oplus W)= \tilde{\psi}((v,0)+(Z(V)\oplus W)=\tilde{\psi} \alpha(\overline{v}).\]

	Also if we consider the identity map $\beta : V^\prime \rightarrow (V\oplus W)^\prime = V^\prime$, then we get the desired results .
	
\end{proof}

 We list some results whose proof are all most similar, one can find the proofs in the following papers [5,9].

\begin{lemma}
	Let $(V,[.,.],\varphi)$ be a Hom-Lie algebra and $I$ be an ideal. Then $V/I \sim V/{(I\cap V^\prime)}$. In particular, if $I\cap V^\prime=0$ then $V\sim V/I$. Conversely if $V^\prime$  is finite dimensional and $V\sim V/I$ then $I\cap V^\prime=0$.
\end{lemma}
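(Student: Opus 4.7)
The plan is to first reduce the general claim to the special case ``$I\cap V^\prime=0 \Rightarrow V\sim V/I$''. Setting $J:=I\cap V^\prime$, I would verify that the ideal $I/J$ of $V/J$ satisfies $(I/J)\cap(V/J)^\prime=0$, using the identification $(V/J)^\prime=V^\prime/J$ (which holds because $J\subseteq V^\prime$). Then the special case applied to the regular Hom-Lie algebra $V/J$ and its ideal $I/J$ yields $V/J\sim (V/J)/(I/J)\cong V/I$ via the third isomorphism theorem, and this rewrites as $V/(I\cap V^\prime)\sim V/I$ as required.

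For the key special case, my first step is to note that the hypothesis $I\cap V^\prime=0$ forces $I\subseteq Z(V)$: for any $x\in I$ and $y\in V$ the bracket $[x,y]$ lies in $I\cap V^\prime=0$. From this I would identify $Z(V/I)=Z(V)/I$ (the inclusion $\supseteq$ is immediate, and conversely any representative $v$ of an element of $Z(V/I)$ satisfies $[v,V]\subseteq I\cap V^\prime=0$, so $v\in Z(V)$). By the third isomorphism theorem, $(V/I)/Z(V/I)\cong V/Z(V)$, and the canonical projection $\pi:V\to V/I$ induces an isomorphism $\alpha:V/Z(V)\to (V/I)/Z(V/I)$. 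For the second map, the restriction $\beta:=\pi|_{V^\prime}:V^\prime\to (V/I)^\prime=(V^\prime+I)/I$ is surjective by construction and injective since $V^\prime\cap I=0$. Compatibility of $\alpha,\beta$ with the twistings is inherited from $\pi$, and the square in Definition~2.1 commutes since, on representatives, both routes send $(\overline{v_1},\overline{v_2})$ to $[v_1,v_2]+I$; hence $(\alpha,\beta)$ is an isoclinism.

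For the converse, assuming $V\sim V/I$ via some $(\alpha,\beta)$ with $V^\prime$ finite-dimensional, I would use only the fact that $\beta:V^\prime\to(V/I)^\prime$ is an isomorphism. Combined with $(V/I)^\prime=(V^\prime+I)/I\cong V^\prime/(V^\prime\cap I)$, this yields $\dim V^\prime=\dim V^\prime/(V^\prime\cap I)$, and finite-dimensionality forces $V^\prime\cap I=0$. The main technical point in the whole argument is the careful identification $Z(V/I)=Z(V)/I$ in the special case; once this is established, the remaining steps are routine applications of the third isomorphism theorem and the fact that $\pi$ is a morphism of regular Hom-Lie algebras.
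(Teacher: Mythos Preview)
Your proposal is correct and follows precisely the standard argument that the paper defers to (the paper gives no proof of its own here, citing [5,9] instead): reduce to the special case via $J=I\cap V^\prime$ and the third isomorphism theorem, handle the special case by observing $I\subseteq Z(V)$ and identifying $Z(V/I)=Z(V)/I$, and deduce the converse from a dimension count on $V^\prime$. The only point worth flagging is that the statement as written omits the regularity hypothesis on $(V,\varphi)$ that you (rightly) use throughout; this is needed for $Z(V)$ to be an ideal and for the isoclinism notion to be defined at all, and is implicit in the paper's standing conventions.
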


\begin{corollary}
	Let $(V,[.,.]_1,\varphi_1)$ and $(W,[.,.]_2,\varphi_2)$ be two Hom-Lie algebras. If $f:V\rightarrow W$ is an onto homomorphism such that $Ker(f)\cap V^\prime=0$, then $f$ induces a isoclinism between $V$ and $W$.  
\end{corollary}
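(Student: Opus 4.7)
The plan is to construct the isoclinism pair $(\alpha, \beta)$ directly out of $f$. An alternative shortcut would be to apply Lemma~2.3 to the ideal $I = \ker(f)$: since $I \cap V' = 0$ by hypothesis, the lemma gives $V \sim V/\ker(f)$, and the first isomorphism theorem (readily checked in the Hom-Lie setting, using that $f \circ \varphi_1 = \varphi_2 \circ f$ makes the induced map intertwine the twists) identifies $V/\ker(f) \cong W$, so transitivity of isoclinism yields $V \sim W$. I would nonetheless prefer the direct construction because it makes the maps $\alpha, \beta$ visible and isolates the role of the hypothesis.

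First, I would take $\beta := f|_{V'} : V' \to W'$. Since $f$ is surjective and preserves brackets, $f(V') = W'$, and compatibility with the twists is inherited from $f$. Injectivity is precisely where the assumption enters: if $x \in V'$ with $\beta(x) = 0$, then $x \in \ker(f) \cap V' = 0$.

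Next, I would show $f(Z(V)) \subseteq Z(W)$: for $z \in Z(V)$ and arbitrary $w = f(v) \in W$ we have $[f(z),w]_2 = f([z,v]_1) = 0$. Thus $f$ descends to a well-defined homomorphism $\alpha : V/Z(V) \to W/Z(W)$ that intertwines $\widetilde{\varphi_1}$ and $\widetilde{\varphi_2}$. Surjectivity of $\alpha$ is inherited from that of $f$. For injectivity, suppose $\alpha(\overline{v}) = 0$, i.e.\ $f(v) \in Z(W)$. Then for every $v' \in V$, $f([v,v']_1) = [f(v), f(v')]_2 = 0$, so $[v,v']_1 \in \ker(f) \cap V' = 0$, forcing $v \in Z(V)$. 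Hence $\alpha$ is an isomorphism.

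Finally, the commutativity of the defining square is a one-line check: on representatives, $\beta \circ \mu (\overline{v_1}, \overline{v_2}) = f([v_1,v_2]_1) = [f(v_1), f(v_2)]_2 = \rho \circ \alpha^2 (\overline{v_1}, \overline{v_2})$. The only step that genuinely uses the hypothesis $\ker(f) \cap V' = 0$ is the injectivity of $\alpha$ (and of $\beta$); this is the heart of the argument, while the rest is bookkeeping. So this is where I would expect the only mild difficulty, and I would make sure to highlight the computation $[v,v']_1 \in \ker(f) \cap V'$ that closes the loop.
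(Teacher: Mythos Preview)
Your direct construction of $(\alpha,\beta)$ is correct, and the key injectivity arguments for both $\alpha$ and $\beta$ are exactly where the hypothesis $\ker(f)\cap V'=0$ must be invoked; the computation $[v,v']_1\in\ker(f)\cap V'$ is indeed the crux.

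The paper, however, does not spell out a proof at all: this statement sits in a block of results (Lemma~2.3 through Lemma~2.6) whose proofs are explicitly omitted and deferred to the references on Lie algebras and Lie superalgebras. The fact that it is labeled a \emph{Corollary} immediately after Lemma~2.3 strongly signals that the intended argument is precisely your ``alternative shortcut'': take $I=\ker(f)$, apply Lemma~2.3 to get $V\sim V/\ker(f)$, and then invoke the first isomorphism theorem $V/\ker(f)\cong W$ together with the fact that an isomorphism is in particular an isoclinism. Your direct route has the advantage of being self-contained and of exhibiting the isoclinism pair explicitly (and of showing, via the injectivity of $\alpha$, the slightly non-obvious fact that $f^{-1}(Z(W))=Z(V)$ under the hypothesis); the paper's implicit route is shorter but leans on Lemma~2.3, whose own proof is also outsourced. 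Either is fine here.

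One small caveat worth flagging in your write-up: the paper's statement says ``Hom-Lie algebras'' without the qualifier ``regular,'' yet isoclinism is only defined in the regular case (so that $Z(V)$ is an ideal and $V/Z(V)$ makes sense). Your argument tacitly uses this when forming $\alpha$ and when asserting that $\beta$ intertwines the twists on $V'$; it would be cleaner to state the regularity assumption explicitly.
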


\begin{lemma}
	Suppose $\mathcal{C}$ is a isoclinic family of regular Hom-Lie algebas. Then
	\\(1) $\mathcal{C}$ contains a stem Hom-Lie algebra.
	\\(2) Each finite dimensional regular Hom-Lie algebra $T\in \mathcal{C}$ is stem if and only if $T$ has minimal dimension in $\mathcal{C}$.
\end{lemma}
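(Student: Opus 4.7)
My plan is to base both parts on a single reduction. For any $V\in\mathcal{C}$, fix a $\varphi$-invariant complement $M$ of $Z(V)\cap V'$ inside $Z(V)$, so that $Z(V)=(Z(V)\cap V')\oplus M$. Since $M\subseteq Z(V)$ and $\varphi(M)\subseteq M$, $M$ is an ideal of $V$ with $M\cap V'=0$, so the preceding lemma yields $V\sim V/M$. A direct check gives $(V/M)'=(V'+M)/M\cong V'$ (the last isomorphism using $V'\cap M=0$). The isoclinism $V\sim V/M$ furnishes an isomorphism $(V/M)/Z(V/M)\cong V/Z(V)$, whence in the finite-dimensional setting $\dim Z(V/M)=\dim Z(V)-\dim M=\dim(Z(V)\cap V')$. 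Together with the obvious inclusion $Z(V)/M\subseteq Z(V/M)$, this forces $Z(V/M)=Z(V)/M$, which is contained in $(V'+M)/M=(V/M)'$. Hence $V/M$ is stem, proving (1) by applying the construction to any chosen $V\in\mathcal{C}$.

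For (2), fix an isoclinism $(\alpha,\beta)$ between $V$ and $T$. A short chase through the commutative square of Definition~2.1 shows that $\beta\colon V'\to T'$ restricts to an isomorphism $Z(V)\cap V'\to Z(T)\cap T'$. Combined with $\dim V/Z(V)=\dim T/Z(T)$ supplied by $\alpha$, we obtain
\[
\dim V \;\ge\; \dim V/Z(V)+\dim(Z(V)\cap V') \;=\; \dim T/Z(T)+\dim(Z(T)\cap T').
\]
If $T$ is stem the right-hand side equals $\dim T$, so $\dim V\ge\dim T$ for every $V\in\mathcal{C}$, giving minimality. Conversely, if $T$ has minimal dimension in $\mathcal{C}$, apply the first-paragraph construction to $T$ itself: the resulting $M\subseteq Z(T)$ satisfies $T\sim T/M$ with $\dim(T/M)=\dim T-\dim M$, and minimality forces $M=0$. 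Thus $Z(T)=Z(T)\cap T'\subseteq T'$, i.e. $T$ is stem.

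The main obstacle is producing the $\varphi$-invariant complement $M$ in the opening step. Since $\varphi$ need not act semisimply on $Z(V)$, an invariant subspace need not admit a $\varphi$-invariant vector-space complement (consider a non-diagonalisable Jordan block). I would bypass this by taking $M$ maximal, via Zorn's lemma, among $\varphi$-invariant subspaces of $Z(V)$ satisfying $M\cap V'=0$, and then arguing from maximality that $Z(V)=(Z(V)\cap V')+M$: if some $z\in Z(V)$ lay outside this sum, one could enlarge $M$ by a suitable $\varphi$-cyclic piece built from $z$ without creating any intersection with $V'$, contradicting maximality. A subsidiary subtlety is establishing $Z(V/M)=Z(V)/M$ rather than a proper containment, which I would control via the dimension count arising from the isoclinism $V\sim V/M$ in the finite-dimensional case.
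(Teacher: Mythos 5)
The paper never actually proves this lemma --- it is one of the results deferred to [5,9] with the remark that the proofs are ``almost similar'' to the Lie-algebra case --- so your attempt must be judged on its own terms, and it has a genuine gap at exactly the point you flag. Everything rests on producing a $\varphi$-invariant subspace $M\subseteq Z(V)$ with $Z(V)=(Z(V)\cap V')\oplus M$. Your Zorn-type fix does not close this: choosing $M$ maximal among $\varphi$-invariant subspaces of $Z(V)$ meeting $V'$ trivially does not force $(Z(V)\cap V')+M=Z(V)$, because a maximal invariant subspace avoiding a given invariant subspace need not be a complement; this is precisely the Jordan-block phenomenon you mention, and it cannot be repaired by ``adjoining a $\varphi$-cyclic piece built from $z$'', since every invariant subspace containing any translate of $z$ may already meet $Z(V)\cap V'$. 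The obstruction occurs for actual regular Hom-Lie algebras: let $V$ have basis $x,y,z,w$ with $[x,y]=z$, all other brackets zero, and $\varphi$ fixing $x,y,z$ while $\varphi(w)=w+z$. This is multiplicative, regular, satisfies Hom--Jacobi, and has $Z(V)=\langle z,w\rangle$, $V'=\langle z\rangle$; the only $\varphi$-invariant subspaces of $Z(V)$ are $0$, $\langle z\rangle$ and $Z(V)$, so the only admissible $M$ is $0$ and $V/M=V$ is not stem. (The isoclinism class of this $V$ does contain a stem algebra, namely the Heisenberg algebra with $\varphi=\mathrm{id}$, but it is not of the form $V/M$; so part (1) requires a construction that does not start from an arbitrary representative, e.g.\ via the factor sets of Section 3.)

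Two smaller points. The equality $Z(V/M)=Z(V)/M$ needs no dimension count: if $[v,u]\in M$ for all $u\in V$, then $[v,u]\in M\cap V'=0$, so $v\in Z(V)$; this also frees that step from finite-dimensionality. Your forward implication in (2) is sound (granting Lemma 2.6), and once part (1) and that implication are available, the converse of (2) follows simply by comparing $\dim T$ with the dimension of a stem member of $\mathcal{C}$; routing the converse through the complement construction applied to $T$ itself needlessly re-imports the gap from part (1).
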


\begin{lemma}
	Let the pair $(\alpha , \beta)$ be isoclinism of regular Hom-Lie algebras $(V,[.,.]_1,\varphi_{1})$ and $(W,[.,.]_2,\varphi_{2})$. Then the following statements hold: \\
	(1) $\alpha(v+Z(V))=\beta(v)+Z(W)$;\\
	(2) $\beta([v_1,v_2])=[\beta(v_1),v_3]$, for all $v_1\in V^\prime,v_2\in V$ and $v_3+Z(W)=\alpha(v_2+Z(L))$.
\end{lemma}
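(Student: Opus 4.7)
The plan is to exploit the commutativity of the diagram defining isoclinism, namely $\beta\circ \mu = \rho\circ \alpha^{2}$, which unwraps to the single identity
\[
\beta([v_1,v_2]_1) = [w_1,w_2]_2, \qquad \text{whenever } w_i+Z(W)=\alpha(v_i+Z(V)),
\]
for arbitrary $v_1,v_2\in V$. This identity essentially does all the work, once one notes that $\rho$ is well defined because $Z(W)$ is central (so the right-hand side is independent of the chosen representatives $w_i$). I would state this key identity as the first step of the proof, as both parts reduce to a careful bookkeeping of cosets on top of it.

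For part (1), I would first point out that the statement only makes sense when $v\in V'$, since $\beta$ is defined only on $V'$. Write such a $v$ as a finite sum $v=\sum_{k}[v_k^{(1)},v_k^{(2)}]_1$ with $v_k^{(j)}\in V$, and for each pair pick $w_k^{(j)}\in W$ with $w_k^{(j)}+Z(W)=\alpha(v_k^{(j)}+Z(V))$. Applying $\beta$ termwise via the key identity gives $\beta(v)=\sum_k [w_k^{(1)},w_k^{(2)}]_2$, so
\[
\beta(v)+Z(W)=\sum_k [\,\overline{w_k^{(1)}},\overline{w_k^{(2)}}\,] \qquad \text{in } W/Z(W).
\]
On the other hand, since $\alpha$ is a Hom-Lie algebra homomorphism and brackets descend to the quotient,
\[
\alpha(v+Z(V))=\sum_k \bigl[\alpha(\overline{v_k^{(1)}}),\alpha(\overline{v_k^{(2)}})\bigr]=\sum_k [\,\overline{w_k^{(1)}},\overline{w_k^{(2)}}\,],
\]
which matches.

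Part (2) is then a one-line consequence. Apply the key identity to $[v_1,v_2]_1$ with $v_1\in V'$, $v_2\in V$: choosing $w_1,w_2\in W$ with $\overline{w_i}=\alpha(\overline{v_i})$, we get $\beta([v_1,v_2]_1)=[w_1,w_2]_2$. By hypothesis $v_3$ is a valid choice for $w_2$, and by part (1) $\beta(v_1)$ is a valid choice for $w_1$. Since $[\,\cdot\,,\,\cdot\,]_2$ on $W$ depends on its arguments only through their cosets modulo $Z(W)$, we conclude $[\beta(v_1),v_3]_2=[w_1,w_2]_2=\beta([v_1,v_2]_1)$.

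The main obstacle is purely bookkeeping: keeping straight which element lives in which space (in $V$, in $W$, in a quotient, or as an image of $\mu$/$\rho$), and making sure the apparent ambiguity in picking representatives is harmless. This depends crucially on $Z(V)$ and $Z(W)$ being ideals (guaranteed by Lemma~1.2, as we are in the regular setting) so that the quotient Hom-Lie algebras and the maps $\mu,\rho$ are actually well defined; no further content beyond the definition of isoclinism is needed.
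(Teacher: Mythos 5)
Your proof is correct. The paper itself omits the argument for this lemma (deferring to the Lie-algebra and Lie-superalgebra versions in [5,9]), and what you give is exactly the standard proof from those sources: extract the identity $\beta([v_1,v_2]_1)=[w_1,w_2]_2$ for $\overline{w_i}=\alpha(\overline{v_i})$ from the commuting square, extend to all of $V'$ by linearity for (1), and then specialize representatives for (2); your remark that (1) is only meaningful for $v\in V'$ is also a fair correction to the statement as printed.
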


\begin{lemma}
Let $(V,[.,.],\varphi)$ be a regular Hom-Lie algebra and $V= U \oplus Z(V)$, then $\varphi(U)\subseteq U$.
\end{lemma}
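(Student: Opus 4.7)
The plan is to analyze how $\varphi$ interacts with the decomposition $V = U \oplus Z(V)$. The first step is to upgrade Lemma 1.2 from the inclusion $\varphi(Z(V)) \subseteq Z(V)$ to the equality $\varphi(Z(V)) = Z(V)$. Since $\varphi$ is bijective and multiplicative, its inverse is also multiplicative (apply $\varphi^{-1}$ to $\varphi([x,y])=[\varphi(x),\varphi(y)]$ with $x=\varphi^{-1}(a)$ and $y=\varphi^{-1}(b)$), so the argument of Lemma 1.2 applied to $\varphi^{-1}$ yields $\varphi^{-1}(Z(V)) \subseteq Z(V)$. Combining both inclusions gives $\varphi(Z(V)) = Z(V)$, i.e.\ $\varphi$ restricts to an automorphism of the centre.

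With this in hand, for $u \in U$ I would decompose $\varphi(u) = u' + z$ uniquely with $u' \in U$ and $z \in Z(V)$, and aim to force $z = 0$. By the previous step there exists $z_0 \in Z(V)$ with $\varphi(z_0) = z$, whence $\varphi(u - z_0) = u'$. A dimension count also shows that $\varphi(U)$ is itself a vector-space complement of $Z(V)$ in $V$: indeed $\dim \varphi(U) = \dim U$ by injectivity of $\varphi$, and $\varphi(U) \cap Z(V) = 0$ because any element of this intersection, after applying $\varphi^{-1}$ and using $\varphi^{-1}(Z(V)) = Z(V)$, would lie in $U \cap Z(V) = 0$.

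The main obstacle is the identification $\varphi(U) = U$ of the two complements of $Z(V)$, since in general a $\varphi$-invariant subspace need not coincide with a prescribed complement. To close this gap I would invoke the multiplicativity of $\varphi$ on brackets $[u,v]$ with $v \in V$: writing $v = \varphi(v')$ by regularity and decomposing $v'$ through $U \oplus Z(V)$, the resulting identity $\varphi([u,v']) = [\varphi(u),\varphi(v')] = [u' + z, v] = [u', v]$ should, together with the centrality of $z$ and the uniqueness of the $U \oplus Z(V)$ decomposition applied to $\varphi^{-1}$ of both sides, force the $Z(V)$-component $z$ to vanish for every $u \in U$, thereby yielding $\varphi(U) \subseteq U$.
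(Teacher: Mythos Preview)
Your final step is where the argument breaks down. From the identity
\[
\varphi([u,v']) \;=\; [\varphi(u),\varphi(v')] \;=\; [u'+z,\,v] \;=\; [u',v]
\]
nothing can be deduced about the central component $z$: precisely because $z$ is central, it drops out of every bracket, so the equation you produce no longer contains $z$ at all. Applying $\varphi^{-1}$ to both sides merely returns $[u,v'] = \varphi^{-1}([u',v])$, and the ``uniqueness of the $U\oplus Z(V)$ decomposition'' never engages with $z$. Your earlier steps are fine and in fact prove something true, namely $\varphi(U)\cap Z(V)=0$, so that $\varphi(U)$ is \emph{some} complement of $Z(V)$; but the last paragraph does not upgrade this to $\varphi(U)=U$.

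In fact the statement is false as written, so no argument can close the gap. Take the Heisenberg algebra $V=\langle x,y,z\rangle$ with $[x,y]=z$ and $z$ central, and set $\varphi(x)=x+z$, $\varphi(y)=y$, $\varphi(z)=z$. One checks directly that $\varphi$ is a bijective algebra automorphism (so $(V,[\cdot,\cdot],\varphi)$ is regular), and with $U=\langle x,y\rangle$ we have $V=U\oplus Z(V)$ yet $\varphi(x)=x+z\notin U$. The paper's own proof makes the same leap: it only shows that $\varphi(u)\notin Z(V)$ for $0\neq u\in U$ (equivalently $\varphi(U)\cap Z(V)=0$, exactly what you also obtained) and then asserts $\varphi(u)\in U$ without justification. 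What both arguments actually establish is that $\varphi(U)$ is a complement of $Z(V)$ in $V$, not that it equals the given $U$.
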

\begin{proof}
Suppose $\varphi(u) \in Z(V)$ for some $0\neq u \in U$. Then for every $y \in V$ there exist $x \in V$ such that $\varphi(x)=y$. Thus 
\[0=[\varphi(u),y]=[\varphi(u),\varphi(x)]=\varphi[u,x].\] 
Since $\varphi$ is injective, $[u,x]=0$ for all $x \in V$. Thus $u \in Z(V)$, which is not true. Therefore $\varphi(u) \in U$.
\end{proof}

\section{Factor set in hom-Lie algebras}\label{sec2}
 From onwards we denote a Hom-Lie algebra by $(V,\varphi)$. We have seen in Lemma 1.2 that $Z(V)$ is an ideal if $(V,\varphi)$ is regular, because of this result factor set can be studied only for regular Hom-Lie algebras. 
\begin{definition}
Let $(V,\varphi)$ be a finite dimensional regular Hom-Lie algebra. A skew-bilinear map;
              \[r:V/Z(V)\times V/Z(V) \longrightarrow Z(V)\]
is said to be a $factor~set$ if for all $\overline{v_1}, ~ \overline{v_2},  ~\overline{v_3} \in V/Z(V)$,

 $$r([\overline{v_1},\overline{v_2}],\tilde{\varphi}(\overline{v_3}))+r([\overline{v_2},\overline{v_3}],\tilde{\varphi}(\overline{v_1}))+r([\overline{v_3},\overline{v_1}],\tilde{\varphi}(\overline{v_2}))=0.$$
 
\noindent  Where $\tilde{\varphi}:V/Z(V)\rightarrow V/Z(V)$ defined by $\tilde{\varphi}(\overline{v}):=\varphi(v)+Z(V).$
The factor set $r$ is said to be multiplicative if \[r(\tilde{\varphi}(\overline{v_1}),\tilde{\varphi}(\overline{v_2}))=\varphi r(\overline{v_1},\overline{v_2}) ~~~~\forall ~\overline{v_1},\overline{v_2} \in V/Z(V). \]
\end{definition}
The next Lemma tells us that we can always construct a new Hom-Lie algebra from a given regular Hom-Lie algebra and a factor set on it. The below Lemma is the one of the important main tool of this paper.

\begin{lemma} \label{Lem5}
Let $(V,\varphi)$ be a regular Hom-Lie algebra and $r$ be a factor set on $(V,\varphi)$, we define a set
\[R=(Z(V),V/Z(V),r)=\{(x,\overline{v}):~x \in Z(V), \overline{v} \in V/Z(V)\}. \]
Then
\begin{enumerate}
\item $(R,\psi)$ is a Hom-Lie algebra under the component-wise addition and the skew-bilinear map defined by
\[[(x_{1},\overline{v_1}),(x_{2},\overline{v_2})]:=(r(\overline{v_1},\overline{v_2}),[\overline{v_1},\overline{v_2}])~~~~~~\forall (x_{1},\overline{v_1}),(x_{2},\overline{v_2}) \in R;\]

and the linear map $\psi: R\rightarrow R$ is given by $$\psi ((x,\overline{v})):=(\varphi(x),\tilde{\varphi}(\overline{v}))~~~~~~~~\forall ~(x,\overline{v}) \in R.$$
\item If $r$ is multiplicative, then $(R,\psi)$ is regular.
\item $Z_{R}=\{(x,0) \in R:~x \in Z(V)\} \cong Z(V).$
\end{enumerate}
\end{lemma}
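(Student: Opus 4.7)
The plan is to verify the three items by direct component-wise computation, with the real content living in item (1), where the factor-set identity drops in as exactly what is needed.

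For item (1), skew-bilinearity of the new bracket and linearity of $\psi$ are inherited component-wise from $r$, from the bracket on $V/Z(V)$, and from $\varphi$, $\tilde\varphi$. The main work is the Hom-Jacobi identity for $R$. Expanding a single bracketed term gives
\[
[\psi((x_1,\overline{v_1})),[(x_2,\overline{v_2}),(x_3,\overline{v_3})]]
= \bigl(r(\tilde\varphi(\overline{v_1}),[\overline{v_2},\overline{v_3}]),\,[\tilde\varphi(\overline{v_1}),[\overline{v_2},\overline{v_3}]]\bigr).
\]
Summing cyclically, the second coordinate vanishes because $V/Z(V)$ is itself a Hom-Lie algebra (with structure map $\tilde\varphi$), so the Hom-Jacobi identity from $V$ descends. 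For the first coordinate, I will apply skew-bilinearity of $r$ to each term to get $-r([\overline{v_2},\overline{v_3}],\tilde\varphi(\overline{v_1}))$, at which point the cyclic sum is zero by the defining equation of a factor set. This identification is the main (and essentially the only) delicate step: the factor-set axiom is engineered precisely so that the ``anomaly'' piece of the candidate Hom-Jacobi identity cancels.

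For item (2), assuming $r$ is multiplicative I must show that $\psi$ is both multiplicative and bijective. Multiplicativity reduces to checking the two coordinates of $\psi[(x_1,\overline{v_1}),(x_2,\overline{v_2})]$ against $[\psi(x_1,\overline{v_1}),\psi(x_2,\overline{v_2})]$; the first coordinate equality $\varphi r(\overline{v_1},\overline{v_2})=r(\tilde\varphi(\overline{v_1}),\tilde\varphi(\overline{v_2}))$ is exactly the multiplicativity hypothesis on $r$, while the second is the multiplicativity of the induced bracket on $V/Z(V)$, itself inherited from $V$. For bijectivity of $\psi$, I first note that $\varphi$ restricts to a bijection of $Z(V)$: the inclusion $\varphi(Z(V))\subseteq Z(V)$ is the content of Lemma 1.2, and conversely, if $\varphi(v)\in Z(V)$ then for every $w=\varphi(w')\in V$ one has $\varphi([v,w'])=[\varphi(v),w]=0$, so by injectivity of $\varphi$ we get $[v,w']=0$ and hence $v\in Z(V)$. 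Standard quotient reasoning then makes $\tilde\varphi$ a bijection of $V/Z(V)$, and since $\psi$ acts coordinatewise its bijectivity follows at once.

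For item (3), define $\theta:Z(V)\to Z_R$ by $\theta(x)=(x,0)$. It is a linear bijection by inspection. The brackets match trivially: $[\theta(x_1),\theta(x_2)]=(r(0,0),[0,0])=(0,0)$, which equals $\theta([x_1,x_2])$ since $Z(V)$ is abelian inside $V$. The structure maps also intertwine, $\psi(\theta(x))=(\varphi(x),\tilde\varphi(0))=(\varphi(x),0)=\theta(\varphi(x))$, so $\theta$ is a Hom-Lie algebra isomorphism. I expect the only item requiring real care to be (1); items (2) and (3) are essentially bookkeeping once the Hom-Jacobi identity has been set up.
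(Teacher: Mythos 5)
Your proposal is correct and follows essentially the same route as the paper: expand the bracket componentwise, let the factor-set axiom kill the first coordinate of the cyclic sum and the Hom--Jacobi identity of the quotient $(V/Z(V),\tilde\varphi)$ kill the second, and read off multiplicativity of $\psi$ from multiplicativity of $r$. In fact your treatment of item (2) is more complete than the paper's, which only checks multiplicativity and never addresses bijectivity of $\psi$; your observation that $\varphi$ restricts to a bijection of $Z(V)$ (and hence induces a bijection of $V/Z(V)$) is exactly the missing step, and your explicit verification of item (3) likewise fills in a part the paper leaves unproved.
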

\begin{proof}

  Clearly $[.,.]$ is a well-defined skew-bilinear map. We will verify only the Hom-Jacobi identity. Consider
\begin{align*}
\begin{split}
&[[(x_{1},\overline{v_1}),(x_{2},\overline{v_2})],\psi(x_{3},\overline{v_3})] + [[(x_{2},\overline{v_2}),(x_{3},\overline{v_3})],\psi(x_{1},\overline{v_1})]+[[(x_{3},\overline{v_3}),(x_{1},\overline{v_1})],\psi(x_{2},\overline{v_2})]\\
 & = [(r(\overline{v_1},\overline{v_2}),[\overline{v_1},\overline{v_2}]),(\varphi(x_3),\tilde{\varphi}(\overline{v_3}))]+[(r(\overline{v_2},\overline{v_3}),[\overline{v_2},\overline{v_3}]),(\varphi(x_1),\tilde{\varphi}(\overline{v_1}))]+[(r(\overline{v_3},\overline{v_1}),[\overline{v_3},\overline{v_1}]),(\varphi(x_2),\tilde{\varphi}(\overline{v_2}))]\\
   & = (r([\overline{v_1},\overline{v_2}],\tilde{\varphi}(\overline{v_3})),[[\overline{v_1},\overline{v_2}],\tilde{\varphi}(\overline{v_3})] )+(r([\overline{v_2},\overline{v_3}],\tilde{\varphi}(\overline{v_1})),[[\overline{v_2},\overline{v_3}],\tilde{\varphi}(\overline{v_1})] )+(r([\overline{v_3},\overline{v_1}],\tilde{\varphi}(\overline{v_2})),[[\overline{v_3},\overline{v_1}],\tilde{\varphi}(\overline{v_2})] )\\
    & = \bigg(r([\overline{v_1},\overline{v_2}],\tilde{\varphi}(\overline{v_3}))+r([\overline{v_2},\overline{v_3}],\tilde{\varphi}(\overline{v_1}))+ r([\overline{v_3},\overline{v_1}],\tilde{\varphi}(\overline{v_2})), [[\overline{v_1},\overline{v_2}],\tilde{\varphi}(\overline{v_3})]+[[\overline{v_2},\overline{v_3}],\tilde{\varphi}(\overline{v_1})]+[[\overline{v_3},\overline{v_1}],\tilde{\varphi}(\overline{v_2})]\bigg) \\
     &=(0,\overline{0}). \\
   \end{split}
\end{align*}
Thus $(R,\psi)$ is a Hom-Lie algebra. Now assume that $r$ is multiplicative. Then in one hand \[\psi ([(x_{1},\overline{v_1}),(x_{2},\overline{v_2})])=\psi (r(\overline{v_1},\overline{v_2}),[\overline{v_1},\overline{v_2}])=(\varphi r(\overline{v_1},\overline{v_2}),\tilde{\varphi}[\overline{v_1},\overline{v_2}]). \]
On the other side
\[ [\psi(x_{1},\overline{v_1}),\psi(x_{2},\overline{v_2})]=\psi[(\varphi(x_1),\tilde{\varphi}(\overline{v_1})),(\varphi(x_2),\tilde{\varphi}(\overline{v_2}))]=(r(\tilde{\varphi}(\overline{v_1}),\tilde{\varphi}(\overline{v_2})),[\tilde{\varphi}(\overline{v_1}),\tilde{\varphi}(\overline{v_2})]).\]
 Since $(V/Z(V),\tilde{\varphi})$ is multiplicative, therefore $(R,\psi)$ also.

\end{proof}
 Naturally we can think for the existence of the factor set of a given regular Hom-Lie algebra and the connection between them.
\begin{lemma}\label{Lem6}
 For a regular Hom-Lie algebra $(V,\varphi)$ there exist a factor set $r$ such that $$(V,\varphi) \cong (R,\psi).$$
\end{lemma}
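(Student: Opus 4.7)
The plan is to construct an explicit isomorphism $V \cong R$ by picking a $\varphi$-equivariant linear section $\sigma$ of the projection $V \to V/Z(V)$ and reading off $r$ from the failure of $\sigma$ to preserve the bracket. Concretely, since $Z(V)$ is an ideal (Lemma 1.2) and $V$ is finite-dimensional, I would choose a complement so that $V = U \oplus Z(V)$; Lemma 2.6 then gives $\varphi(U) \subseteq U$, and one checks directly from the proof of Lemma 1.2 that $\varphi(Z(V)) \subseteq Z(V)$. Identifying $V/Z(V) \cong U$ through this decomposition defines a linear section $\sigma \colon V/Z(V) \to V$ with $\sigma(\overline{v}) \in U$ and, crucially, $\sigma \circ \widetilde{\varphi} = \varphi \circ \sigma$.

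Now set
$$r(\overline{v_1}, \overline{v_2}) \;:=\; [\sigma(\overline{v_1}), \sigma(\overline{v_2})] - \sigma([\overline{v_1}, \overline{v_2}]).$$
Both summands project to the same element of $V/Z(V)$, so $r$ takes values in $Z(V)$; skew-bilinearity is immediate. For the factor-set identity, cyclically sum $r([\overline{v_1}, \overline{v_2}], \widetilde{\varphi}(\overline{v_3}))$ and use $\sigma \widetilde{\varphi} = \varphi \sigma$: the $\sigma$-terms cancel by Hom-Jacobi in $V/Z(V)$, and the bracket terms, after noting that any bracket involving a value of $r$ is zero (since $r$ is central), reduce to the cyclic sum $\sum [[\sigma(\overline{v_i}), \sigma(\overline{v_j})], \varphi(\sigma(\overline{v_k}))]$, which vanishes by Hom-Jacobi in $V$. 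Multiplicativity of $r$ follows from multiplicativity of $\varphi$ together with $\sigma \widetilde{\varphi} = \varphi \sigma$.

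Finally, define $\Phi \colon V \to R$ by $\Phi(v) := (v - \sigma(\overline{v}),\, \overline{v})$; this is a linear bijection with inverse $(x, \overline{v}) \mapsto x + \sigma(\overline{v})$. To verify that $\Phi$ preserves brackets, write $v_i = \sigma(\overline{v_i}) + z_i$ with $z_i \in Z(V)$ and observe that centrality gives $[v_1, v_2] = [\sigma(\overline{v_1}), \sigma(\overline{v_2})]$; comparing with $[\Phi(v_1), \Phi(v_2)] = (r(\overline{v_1}, \overline{v_2}), [\overline{v_1}, \overline{v_2}])$ and plugging in the definition of $r$ produces the required equality. The intertwining $\Phi \circ \varphi = \psi \circ \Phi$ is a one-line check using $\sigma \widetilde{\varphi} = \varphi \sigma$.

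The main obstacle is the bookkeeping around the section: everything hinges on being able to pick $\sigma$ that is simultaneously a vector-space section \emph{and} commutes with $\varphi$. This is exactly what Lemma 2.6 buys us, combined with the fact (implicit in the proof of Lemma 1.2) that $\varphi$ restricts to $Z(V)$. Once $\sigma$ is fixed, the factor-set axiom and multiplicativity of $r$ are essentially forced by transporting the Hom-Jacobi and multiplicativity of $V$ through $\sigma$, and the triviality of brackets with central elements makes all cross-terms disappear.
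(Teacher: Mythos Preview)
Your argument is correct and follows the paper's proof essentially verbatim: the paper also splits $V = U \oplus Z(V)$, uses the lemma that $\varphi(U) \subseteq U$ (this is Lemma~2.7 in the paper's numbering, not 2.6) to obtain a $\varphi$-equivariant section $f$ (your $\sigma$), defines $r$ as the bracket-defect of that section, and writes down the isomorphism $(x,\overline{v}) \mapsto x + f(\overline{v})$, which is exactly the inverse of your $\Phi$. Your extra verification that $r$ is multiplicative is not in the paper's proof but is a welcome addition, since it is needed later (e.g.\ in Lemma~3.5) and also follows a posteriori from Lemma~1.4 once the isomorphism is established.
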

\begin{proof}
Let $V=U \oplus Z(V)$. Let us define $f:V/Z(V)\rightarrow V$ by $f(\overline{v})=f(v+Z(V))=f(u+z+Z(V))=u$, for all $\overline{v} \in V/Z(V)$ and $u \in U$ and $z \in Z(V)$. It is easy to see that $f$ is a well-defined linear map. Also 
for $\overline{v_{1}}=u_{1}+z_{1}, \overline{v_2}=u_{2}+z_{2}$, consider $[\overline{v_1},\overline{v_2}]=[k,k']+Z(V)$. Then
\begin{align*} 
 [f(\overline{v_1}),f(\overline{v_2})]-f[\overline{v_1},\overline{v_2}] +Z(V)&= [u_1,u_2]-f(\overline{[u_1,u_2]})+Z(V) \\
 &=[u_1, u_2]- \overline{f(\overline{[u_1, u_2]})}= 0+Z(V).
\end{align*}
Thus $[f(\overline{v_1}),f(\overline{v_2})]-f[\overline{v_1},\overline{v_2}]  \in Z(V)$.
Now define
$r:V/Z(V)\times V/Z(V) \longrightarrow Z(V)$ which is given by $$r(\overline{v_1},\overline{v_2})=[f(\overline{u_1}),f(\overline{u_2})]-f[\overline{u_1},\overline{u_2}].$$ 
 At first observe that $r$ is a skew-bilinear map. Now to check Hom-Jacobi identity we will first prove $f\tilde{\varphi}(\overline{v})=\varphi f(\overline{v})$ for all $\overline{v} \in V/Z(V)$. By Lemma 2.7, we have 
\[f\tilde{\varphi}(\overline{v})=f\tilde{\varphi}(u+z+Z(V))=f(\varphi(u)+Z(V))=\varphi(u)\]

On the other side 
\[\varphi f(\overline{v})=\varphi (f(u+z+Z(V)))=\varphi(u).\]
Consider
 \begin{equation*} 
\begin{split}
r([\overline{v_1},\overline{v_2}],\tilde{\varphi}(\overline{v_3})) &= [f([\overline{v_1},\overline{v_2}]),f\tilde{\varphi}(\overline{v_3})]-f[[\overline{v_1},\overline{v_2}],\tilde{\varphi}(\overline{v_3})]\\
&= [[f(\overline{v_1}),f(\overline{v_2})]+z,\varphi f(\overline{v_3})]-f[[\overline{v_1},\overline{v_2}],\tilde{\varphi}(\overline{v_3})]\\
&= -[[f(\overline{v_2}),f(\overline{v_3})],\varphi f(\overline{v_1})]-[[f(\overline{v_3}),f(\overline{v_1})],\varphi f(\overline{v_2})]+f([[\overline{v_2},\overline{v_3}],\tilde{\varphi}(\overline{v_1})]+f[[\overline{v_3},\overline{v_1}],\tilde{\varphi}(\overline{v_2})])\\
&= -[[f(\overline{v_2}),f(\overline{v_3})],f\tilde{\varphi}(\overline{v_1})]-[[f(\overline{v_3}),f(\overline{v_1})],f\tilde{\varphi}(\overline{v_2})]+f([[\overline{v_2},\overline{v_3}],\tilde{\varphi}(\overline{v_1})])+f([[\overline{v_3},\overline{v_1}],\tilde{\varphi}(\overline{v_2})])\\
&= -r([\overline{v_2},\overline{v_3}],\tilde{\varphi}(\overline{v_1}))-r([\overline{v_3},\overline{v_1}],\tilde{\varphi}(\overline{v_2})).\\
\end{split}
\end{equation*}
for all $\overline{v_1}, \overline{v_2}, \overline{v_3} \in V/Z(V)$.
Denote $(Z(V),V/Z(V),r)=R$. Define $\theta:(Z(V),V/Z(V),r)\rightarrow V$  by $$\theta(x,\overline{v})= x+f(\overline{v})~~ \forall  ~x \in  Z(V), \overline{v} \in V/Z(V).$$

\noindent  Evidently $\theta$ is well-defined bijective linear map and also $\theta([(x_{1},\overline{v_1}),(x_{2},\overline{v_2})]) 
=[\theta(x_{1},\overline{v_1}),\theta(x_{2},\overline{v_2})].$ Also 
\[\varphi\theta (x,\overline{v})=\varphi(x+f(\overline{v}))=\varphi(x+u);\]
and 
\[\theta \psi (x,\overline{v})=\theta(\varphi(x),\tilde{\varphi}(\overline{v}))=\varphi(x)+f(\varphi(u)+Z(V))=\varphi(x)+\varphi(u),\]

where $v=u+z,~u \in U, z \in Z(V)$. Thus $\theta$ is the required isomorphism.
\end{proof}

If $(V,\varphi_{1})$ and $(W,\varphi_{2})$ are two isoclinic stem Hom-Lie algebras, then the next Lemma gives the connection between $(W,\varphi_{2})$ and a new Hom-Lie algebra constructed from the factor set of $(V,\varphi_{1})$.

 \begin{lemma}
Let $(V,\varphi_{1})$ be a stem Hom-Lie algebra in an isoclinism family of regular Hom-Lie algebras $\mathcal{C}$. Then for any stem Hom-Lie algebra $(W,\varphi_{2})$ of $\mathcal{C}$, there exists a factor set $r$ over $(V,\varphi_{1})$ such that $W \cong (Z(V),V/Z(V),r)$. 
\end{lemma}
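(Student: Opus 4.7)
Since $V$ and $W$ lie in the same isoclinism class $\mathcal{C}$, fix an isoclinism $(\alpha,\beta)$ with $\alpha\colon V/Z(V)\to W/Z(W)$ and $\beta\colon V'\to W'$ both isomorphisms. The strategy is to transport a factor set from $W$ back to $V$ along $(\alpha,\beta)$. First I would use the stem hypothesis on both $V$ and $W$ to show that $\beta$ restricts to a linear isomorphism $\beta_0\colon Z(V)\to Z(W)$ which intertwines $\varphi_1$ and $\varphi_2$: since $Z(V)\subseteq V'$ the map $\beta_0$ is defined, and Lemma~2.6(1) applied to $z\in Z(V)$ gives $\beta(z)+Z(W)=\alpha(\overline{0})=\overline{0}$, so $\beta(Z(V))\subseteq Z(W)$; running the same argument for the inverse isoclinism $(\alpha^{-1},\beta^{-1})$ (and using that $W$ is stem) gives the reverse containment, while $\beta\varphi_1=\varphi_2\beta$ on $V'$ restricts accordingly.

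Next, apply Lemma~\ref{Lem6} to $W$ to produce a factor set $s\colon W/Z(W)\times W/Z(W)\to Z(W)$ with $W\cong(Z(W),W/Z(W),s)$, and define
\[ r(\overline{v_1},\overline{v_2}) \;:=\; \beta_0^{-1}\bigl(s(\alpha(\overline{v_1}),\alpha(\overline{v_2}))\bigr). \]
Step~1 ensures $r$ is $Z(V)$-valued, and skew-bilinearity is immediate. The Hom-Jacobi identity for $r$ follows by applying the bijection $\beta_0^{-1}$ to the corresponding identity for $s$ evaluated at the points $\alpha(\overline{v_i})$, then using that $\alpha$ is a Hom-Lie morphism (so it commutes with brackets and with $\tilde{\varphi}$) to rewrite each term $s([\alpha\overline{v_i},\alpha\overline{v_j}],\tilde{\varphi}_2\alpha\overline{v_k})$ as $s(\alpha[\overline{v_i},\overline{v_j}],\alpha\tilde{\varphi}_1\overline{v_k})$. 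Multiplicativity of $r$ transfers from that of $s$ in the same manner, using $\beta_0\varphi_1=\varphi_2\beta_0$.

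Finally, I would define $\Phi\colon(Z(V),V/Z(V),r)\to(Z(W),W/Z(W),s)$ by $\Phi(x,\overline{v}):=(\beta_0(x),\alpha(\overline{v}))$. It is a linear bijection component by component, it preserves brackets because $\beta_0\circ r=s\circ(\alpha\times\alpha)$ by construction and $\alpha$ is a morphism on the second slot, and it intertwines the two twist maps $\psi_r$ and $\psi_s$ of Lemma~\ref{Lem5} by the commutation relations $\beta_0\varphi_1=\varphi_2\beta_0$ and $\alpha\tilde{\varphi}_1=\tilde{\varphi}_2\alpha$ established above. Composing $\Phi^{-1}$ with the isomorphism $(Z(W),W/Z(W),s)\cong W$ of Lemma~\ref{Lem6} yields the desired $W\cong(Z(V),V/Z(V),r)$. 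The main obstacle is really the first step: the stem hypothesis on both sides is essential because it is what allows $\beta^{-1}$ to be applied to the values of $s$ while keeping the output in $Z(V)$, so that $r$ is well defined as a factor set at all; without stemness the construction does not even type-check.
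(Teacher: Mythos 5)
Your proposal is correct and follows essentially the same route as the paper: pull the factor set $s$ on $W$ (from Lemma \ref{Lem6}) back to $V$ via $r=\beta_0^{-1}\circ s\circ(\alpha\times\alpha)$, and realize the isomorphism as $(x,\overline{v})\mapsto(\beta_0(x),\alpha(\overline{v}))$. Your first step is in fact slightly more careful than the paper's, which only cites Lemma 2.6 for the fact that $\beta$ carries $Z(V)$ onto $Z(W)$, whereas you spell out how stemness of both algebras makes that restriction well defined and invertible.
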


\begin{proof}
Let the pair $(\alpha,\beta)$ be isoclinism of regular Hom-Lie algebras $(V,\varphi_{1})$ and $(W,\varphi_{2})$, then by Lemma 2.6 $\theta(Z(V))=Z(W)$. By virtue of Lemma 3.3, there exists a factor set $s$ such that $W\cong (Z(W),W/Z(W),s)$. If we define 
$ r:V/Z(V)\times V/Z(V) \longrightarrow Z(V) $ by
 \[~~~~~~~~r(\overline{v_1},\overline{v_2})=\theta^{-1}(s(\alpha(\overline{v_1}),\alpha(\overline{v_2})))~~~~\forall ~\overline{v_1},~\overline{v_2} \in V/Z(V), \]
 then $r$ is a skew-bilinear map. Also, since $\alpha$ is an isomorphism i.e. $\alpha \tilde{\varphi_1} = \tilde{\varphi_2} \alpha $. Thus
 \begin{equation*} 
\begin{split}
& r([\overline{v_1},\overline{v_2}],\tilde{\varphi_1}(\overline{v_3}))\\
&= \theta^{-1}(s(\alpha([\overline{v_1},\overline{v_2}]),\alpha \tilde{\varphi_1}(\overline{v_3})))\\
&= \theta^{-1}(s([\alpha(\overline{v_1}),\alpha(\overline{v_2})]), \tilde{\varphi_2}\alpha(\overline{v_3})))\\
&= -\theta^{-1}(s([\alpha(\overline{v_2}),\alpha(\overline{v_3})]), \tilde{\varphi_2}\alpha(\overline{v_1})))-\theta^{-1}(s([\alpha(\overline{v_3}),\alpha(\overline{v_1})]), \tilde{\varphi_2}\alpha(\overline{v_2})))\\
&=-r([\overline{v_2},\overline{v_3}],\tilde{\varphi_1}(\overline{v_1}))- r([\overline{v_3},\overline{v_1}],\tilde{\varphi_1}(\overline{v_2})).
\end{split}
 \end{equation*}

\noindent Thus $(Z(V),V/Z(V),r)$ is a factor set. Let us denote $R=(Z(V),V/Z(V),r)$ and $S=(Z(W),W/Z(W),s)$. Then by Lemma 3.2 $(R,\psi_{1})$ and $(S,\psi_{2})$ are Hom-Lie algebras. Let us define the map $\eta$, from the map $\alpha$ and $\beta$, i.e.
$\eta:(Z(V),V/Z(V),r)\longrightarrow (Z(W),W/Z(W),s)$  by
 \[ \eta(x,\overline{v})=(\beta(x),\alpha(\overline{v})),\]
which is clearly a bijective linear map with   $\eta([(x,\overline{v_1}),(y,\overline{v_2})]) 
=[\eta(x,\overline{v_1}),\eta(y,\overline{v_2})]$. Also
\[\eta \psi_{1} (x,\overline{v})=\eta(\varphi_{1}(x),\tilde{\varphi_1}(\overline{v}))=(\beta\varphi_{1}(x),\alpha\tilde{\varphi_1}(\overline{v})),\]
and 
\[ \psi_{2} \eta (x,\overline{v})=\psi_{2}(\beta(x),\alpha(\overline{v}))= (\varphi_{2}\beta(x),\tilde{\varphi_2}\alpha(\overline{v})) .\]

Since $\alpha$ and $\beta$ are isomorphism, we have $\psi_{1}\eta =\eta \psi_{2}$.
Hence, $\eta$ is an isomorphism and $W\cong (Z(V),V/Z(V),r).$ 
\end{proof}

\begin{lemma}\label{Lem8}
Let $(V,\varphi)$ be a regular Hom-Lie algebra, $r$ and $s$ be two multiplicative factor sets over $(V,\varphi)$. Assume that
\[R=(Z(V),L/Z(V),r),~~~~~~~~Z_{R}=\{(x,0) \in R:~x \in Z(V)\} \cong Z(V),\]
\[S=(Z(V),V/Z(V),s),~~~~~~~~Z_{S}=\{(x,0) \in S:~x \in Z(V)\} \cong Z(V).\]
Let $\eta$ is an isomorphism from $R$ to $S$ satisfying $\eta(Z_{R})=Z_{S}$, then the restriction of $\eta$ on $V/Z(V)$ and $Z(V)$ define the automorphisms $\mu \in Aut(V/Z(V))$ and $\nu \in Aut(Z(V))$, respectively.
 \end{lemma}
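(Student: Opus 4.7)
The plan is to recognize that $Z_R$ is the kernel of the natural projection $\pi_R\colon R\to V/Z(V)$ sending $(x,\overline{v})\mapsto \overline{v}$, and similarly $Z_S=\ker\pi_S$. Since both $Z_R$ and $Z_S$ are ideals (trivial bracket, since $[(x_1,0),(x_2,0)]=(r(0,0),0)=(0,0)$ and similarly for $s$, and each is invariant under $\psi_1,\psi_2$), the quotients $R/Z_R$ and $S/Z_S$ are Hom-Lie algebras naturally isomorphic to $(V/Z(V),\tilde{\varphi})$. The hypothesis $\eta(Z_R)=Z_S$ means $\eta$ descends to an isomorphism $\overline{\eta}\colon R/Z_R\to S/Z_S$, which under these canonical identifications is precisely the automorphism $\mu\in\mathrm{Aut}(V/Z(V))$ I want to produce.

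More concretely, I would define $\mu(\overline{v}):=\pi_S(\eta(x,\overline{v}))$ for any $x\in Z(V)$, and first verify independence of $x$: if $x,x'\in Z(V)$, then $(x,\overline{v})-(x',\overline{v})=(x-x',0)\in Z_R$, so $\eta$ sends the difference into $Z_S=\ker\pi_S$. To check $\mu$ is a Hom-Lie homomorphism, I would compute $\eta([(0,\overline{v_1}),(0,\overline{v_2})])=\eta(r(\overline{v_1},\overline{v_2}),[\overline{v_1},\overline{v_2}])$ and compare with $[\eta(0,\overline{v_1}),\eta(0,\overline{v_2})]$; applying $\pi_S$ to both sides kills the $Z_S$ contributions (including $\eta(r(\overline{v_1},\overline{v_2}),0)$) and yields $\mu([\overline{v_1},\overline{v_2}])=[\mu(\overline{v_1}),\mu(\overline{v_2})]$. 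Similarly, $\eta\psi_1=\psi_2\eta$ followed by $\pi_S$ gives $\mu\tilde{\varphi}=\tilde{\varphi}\mu$. Bijectivity of $\mu$ comes for free by running the same construction on $\eta^{-1}$, which also sends $Z_S$ to $Z_R$.

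For $\nu$, I would use the explicit identification $Z_R\cong Z(V)$ via $(x,0)\mapsto x$. Since $\eta(Z_R)\subseteq Z_S$, I can write $\eta(x,0)=(\nu(x),0)$, which defines a linear map $\nu\colon Z(V)\to Z(V)$. Bijectivity follows because $\eta\vert_{Z_R}\colon Z_R\to Z_S$ is a bijection. The bracket on $Z_R$ is trivial, so bracket-preservation is automatic; the only nontrivial Hom-Lie condition is $\nu\varphi=\varphi\nu$, which I get by restricting $\eta\psi_1=\psi_2\eta$ to $Z_R$, since $\psi_i$ acts as $\varphi$ on the first coordinate there.

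The main obstacle is really just bookkeeping: ensuring that the canonical identifications $R/Z_R\cong V/Z(V)$ and $Z_R\cong Z(V)$ are compatible with the Hom-Lie structures (in particular, that the induced map $\tilde{\varphi}$ on $R/Z_R$ matches the $\tilde{\varphi}$ on $V/Z(V)$), and keeping track that well-definedness of $\mu$ uses exactly the hypothesis $\eta(Z_R)=Z_S$ — without this, the quotient $\overline{\eta}$ would not exist. Once these identifications are set, the verification that $\mu$ and $\nu$ are Hom-Lie homomorphisms reduces to reading off the two coordinates in the identities $\eta[\cdot,\cdot]=[\eta(\cdot),\eta(\cdot)]$ and $\eta\psi_1=\psi_2\eta$.
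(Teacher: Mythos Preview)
Your proposal is correct and follows essentially the same approach as the paper: both pass to the induced isomorphism $\overline{\eta}\colon R/Z_R\to S/Z_S$ and transport it to $V/Z(V)$ via the canonical identifications (you use the second-coordinate projections $\pi_R,\pi_S$, while the paper uses the section maps $\lambda_i(\overline v)=(0,\overline v)+Z_{\bullet}$, which are mutual inverses modulo $Z_\bullet$), and both define $\nu$ by $\eta(x,0)=(\nu(x),0)$ and read off $\nu\varphi=\varphi\nu$ from $\eta\psi=\psi\eta$. The only difference is cosmetic notation; your argument is in fact slightly more explicit about well-definedness and bracket preservation than the paper's.
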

 
\begin{proof}
By Lemma 3.2 $(R,\psi)$ and $(S,\psi)$ are regular Hom-Lie algebras. Thus we can define the quotient Hom-Lie algebra, as $Z(R)=Z_{R}$ and $Z(S)=Z_{S}$. Since $\eta$ is an isomorphism and $\eta(Z_{R})=Z_{S}$, then  map $\overline{\eta}:(R/Z_{R},\psi_{1})\longrightarrow (S/Z_{S},\psi_{2})$ given by $\overline{\eta}((x,\overline{v})+Z_{R})=\eta(x,\overline{v})+Z_{S}$ is an isomorphism, where $\psi_{1}:R/Z_{R} \rightarrow R/Z_{R}$ and $\psi_{2}:S/Z_{S} \rightarrow S/Z_{S}$ are linear map defined as $\psi_{1}((x,\overline{v})+Z_{R})=\psi(x,\overline{v})+Z_{R}$ and $\psi_{2}((x,\overline{v})+Z_{S})=\psi(x,\overline{v})+Z_{S}$ respectively. Define 
$\mu$ such that the following diagram is commutative;
\begin{center}
  \begin{tikzpicture}[>=latex]
\node (x) at (0,0) {\(V/Z(V) \)};
\node (z) at (0,-2) {\(R/Z_{R}\)};
\node (y) at (3,0) {\(V/Z(V)\)};
\node (w) at (3,-2) {\(S/Z_{S}\)};
\draw[->] (x) -- (y) node[midway,above] {$\mu$};
\draw[->] (x) -- (z) node[midway,left] {$\lambda_{1}$};
\draw[->] (z) -- (w) node[midway,below] {$\overline{\eta}$};
\draw[->] (y) -- (w) node[midway,right] {$\lambda_{2}$};
\end{tikzpicture}\\
\end{center}
 where $\lambda_{1}$ and $\lambda_{2}$ are projection map, i.e. $\lambda_{1}( \overline{v})= (0, \overline{v})+Z_{R}$ and $\lambda_{2}(\overline{v})=(0, \overline{v})+Z_{S}$. Thus $\eta(0,\overline{v})+Z_{S}=(0,\mu(\overline{v}))+Z_{S}$ for all $\overline{v} \in V/Z(V)$. 
 
\noindent Now
\[(0,\mu \tilde{\varphi}(\overline{v}))+Z_{S}=\eta(0,\varphi(v)+Z(V)) +Z_{S}=\eta\psi(0,\overline{v})+Z_{S}.   \] 
 Also on the another side
 
 \[(0, \tilde{\varphi}\mu(\overline{v}))+Z_{S}=\psi(0,\mu(\overline{v}))+Z_{S}=\psi(\eta(0,\overline{v}) +x)+Z_{S}=\psi\eta(0,\overline{v})+\psi(x)+Z_{S}=\psi\eta(0,\overline{v})+Z_{S},
    \]
 where $x \in Z_{S}$. Since $\eta$ is an automorphism i.e. $\eta\psi=\psi\eta$ and $\lambda_{2}$ is injective, we have
 
 \begin{equation*} 
\begin{split}
  &(0,\mu \tilde{\varphi}(\overline{v}))+Z_{S} = (0, \tilde{\varphi}\mu(\overline{v}))+Z_{S}\\
 & \Rightarrow \lambda_{2}(\mu \tilde{\varphi} (\overline{v}))= \lambda_{2}(\tilde{\varphi}\mu (\overline{v}))\\
& \Rightarrow \mu \tilde{\varphi}(\overline{v})= \tilde{\varphi} \mu(\overline{v})
 \end{split}
\end{equation*}
Certainly $\mu$ is bijective and $\mu([\overline{v_{1}},\overline{v_2}])=[\mu(\overline{v_1}),\mu(\overline{v_2})]$. Thus $\mu$ is an automorphism.
 Consider the map $\tilde{\eta}: Z_{R} \longrightarrow Z_{S}$ is defined as $\tilde{\eta}(x,0)=\eta(x,0)$ for all $x \in Z(V)$, is an isomorphism. Define $\nu$ in such a way that the following diagram is commutative;
\begin{center}
\begin{tikzpicture}[>=latex]
\node (x) at (0,0) {\( Z(V) \)};
\node (z) at (0,-2) {\(Z_{R}\)};
\node (y) at (2,0) {\(Z(V)\)};
\node (w) at (2,-2) {\(Z_{S}\)};
\draw[->] (x) -- (y) node[midway,above] {$\nu$};
\draw[->] (x) -- (z) node[midway,left] {$\overline{\lambda_1}$};
\draw[->] (z) -- (w) node[midway,below] {$\tilde{\eta}$};
\draw[->] (y) -- (w) node[midway,right] {$\overline{\lambda_2}$};
\end{tikzpicture}\\
\end{center}
 
\noindent  where $\overline{\lambda_1}$ and $\overline{\lambda_2}$ are projection map and  $\eta(x,0)=(\nu(x),0)$ for all $x \in Z(V)$. One can easily verify that $\nu$ is an automorphism.
\end{proof}

\begin{lemma}\label{lem9}
Let $(V,\varphi)$ be a regular Hom-Lie algebra and $(R,\psi), (S,\psi), Z_{R}$ and $Z_{S}$ be as in the previous Lemma.
\begin{enumerate}
\item Consider $\eta : R \longrightarrow S$ is a Hom-Lie algebra isomorphism such that $\eta(Z_{R})= Z_{S}$. Let $\mu \in Aut(V/Z(V))$ and $\nu \in Aut(Z(V))$ be the automorphisms induced by $\eta$. Then there exists a linear map, $\gamma: V/Z(V) \longrightarrow Z(V)$ such that,\[\nu(r(\overline{v_1}, \overline{v_2})+ \gamma[\overline{v_1}, \overline{v_2}])=s(\mu(\overline{v_1}), \mu(\overline{v_2})).\]
\item If $\mu \in Aut(V/Z(V))$, $\nu \in Aut(Z(V))$ and $\delta: V/Z(V) \longrightarrow Z(V)$ is a linear map  such that $$\nu(r(\overline{v_1}, \overline{v_2})+ \delta[\overline{v_1}, \overline{v_2}])=s(\mu(\overline{v_1}), \mu(\overline{v_2})) ,~~~~~~~ \delta \tilde{\varphi}=\varphi \delta.$$ Then there exists an isomorphism $\eta:R \longrightarrow S$ which is induced by $\mu$ and $\nu$ satisfying $\eta(Z_{R})=Z_{S}$.
\end{enumerate}
\end{lemma}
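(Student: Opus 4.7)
For part~(1) I would extract the map $\gamma$ by reading off $\eta$ on the two natural pieces of $R$. By Lemma 3.5 the restriction of $\eta$ to $Z_R$ is $(x,0)\mapsto(\nu(x),0)$, and since $\mu$ is the induced map on $R/Z_R\cong V/Z(V)$, the image $\eta(0,\overline v)\in S$ must have second coordinate $\mu(\overline v)$. Hence there is a unique function $\gamma'\colon V/Z(V)\to Z(V)$ with $\eta(0,\overline v)=(\gamma'(\overline v),\mu(\overline v))$, and it is linear because $\eta$ is. Combining the two pieces gives $\eta(x,\overline v)=(\nu(x)+\gamma'(\overline v),\mu(\overline v))$. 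I would then apply $\eta$ to the bracket $[(0,\overline{v_1}),(0,\overline{v_2})]=(r(\overline{v_1},\overline{v_2}),[\overline{v_1},\overline{v_2}])$ and compare with $[\eta(0,\overline{v_1}),\eta(0,\overline{v_2})]$ computed in $S$. Equating first coordinates yields $\nu(r(\overline{v_1},\overline{v_2}))+\gamma'([\overline{v_1},\overline{v_2}])=s(\mu(\overline{v_1}),\mu(\overline{v_2}))$. Setting $\gamma:=\nu^{-1}\circ\gamma'$ and using linearity of $\nu$ to pull it outside the sum produces precisely the identity claimed in~(1).

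For part~(2) I would reverse--engineer the formula above and define
\[\eta\colon R\longrightarrow S,\qquad \eta(x,\overline v):=\bigl(\nu(x)+\nu\delta(\overline v),\ \mu(\overline v)\bigr).\]
Linearity is immediate. The bracket identity $\eta([\cdot,\cdot])=[\eta(\cdot),\eta(\cdot)]$ expands to a comparison of coordinates in $S$: the second coordinate matches because $\mu$ is a Lie homomorphism, and the first coordinate, after pulling $\nu$ outside via $\nu(r(\overline{v_1},\overline{v_2}))+\nu\delta([\overline{v_1},\overline{v_2}])=\nu(r(\overline{v_1},\overline{v_2})+\delta([\overline{v_1},\overline{v_2}]))$, reduces to the hypothesis $\nu(r+\delta\circ[\cdot,\cdot])=s\circ(\mu\times\mu)$. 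Bijectivity is read off from the explicit formula: given $(y,\overline w)\in S$ the unique preimage is $(\nu^{-1}(y)-\delta(\mu^{-1}(\overline w)),\mu^{-1}(\overline w))$. Finally $\eta(Z_R)=Z_S$ follows from $\eta(x,0)=(\nu(x),0)$ and surjectivity of $\nu$ on $Z(V)$.

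The main obstacle is verifying compatibility with the twist, $\eta\psi=\psi\eta$, in part~(2), since three separate commutation relations must line up at once. Writing out
\[\psi\eta(x,\overline v)=(\varphi\nu(x)+\varphi\nu\delta(\overline v),\ \tilde\varphi\mu(\overline v)),\qquad \eta\psi(x,\overline v)=(\nu\varphi(x)+\nu\delta\tilde\varphi(\overline v),\ \mu\tilde\varphi(\overline v)),\]
matching the second coordinates uses $\mu\tilde\varphi=\tilde\varphi\mu$ (since $\mu\in Aut(V/Z(V))$), the $x$--summand uses $\varphi\nu=\nu\varphi$ (since $\nu\in Aut(Z(V))$), and the $\overline v$--summand reduces to $\varphi\nu\delta=\nu\delta\tilde\varphi$, which follows by combining $\varphi\nu=\nu\varphi$ with the standing hypothesis $\varphi\delta=\delta\tilde\varphi$. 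Once these three ingredients are assembled the rest of the verification is routine; a small secondary point in part~(1) is remembering to compose with $\nu^{-1}$ at the end so that the cocycle identity appears in the clean form $\nu(r+\gamma\circ[\cdot,\cdot])=s\circ(\mu\times\mu)$ rather than the mixed form that drops out of the bracket computation.
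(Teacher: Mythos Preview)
Your argument is correct and follows essentially the same route as the paper: for part~(1) the paper simply defers to \cite[Lemma~3.6]{Nayak2018}, and for part~(2) it writes down $\eta$ explicitly and verifies only the twist compatibility $\eta\psi=\psi\eta$, leaving the remaining checks as routine. The one cosmetic discrepancy is that the paper sets $\eta(x,\overline v)=(\nu(x)+\delta(\overline v),\mu(\overline v))$ rather than your $(\nu(x)+\nu\delta(\overline v),\mu(\overline v))$; your version is the one that makes the bracket-preservation identity line up directly with the hypothesis $\nu(r+\delta\circ[\,\cdot\,,\cdot\,])=s\circ(\mu\times\mu)$ as stated.
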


\begin{proof}
The proof of the first assertion is similar to [10, Lemma 3.6]. In the second assertion, we will proof the only commutative property of $\eta$, where 
 $\eta: R \longrightarrow S $  is given by
\[\eta(x, \overline{v}) = ( \nu(x)+ \delta(\overline{v}), \mu(\overline{v})).\]

\noindent  In one hand

\[\eta \psi (x, \overline{v})=\eta(\varphi(x),\tilde{\varphi}(\overline{v}))=(\nu \varphi(x)+\delta \tilde{\varphi}(\overline{v}),\mu \tilde{\varphi}(\overline{v})).\]
 
\noindent On the other hand
\[\psi \eta (x, \overline{v})= \psi ( \nu(x)+ \delta(\overline{v}), \mu(\overline{v}))=( \varphi \nu(x)+ \varphi \delta(\overline{v}), \tilde{\varphi} \mu(\overline{v})). \]
Since $\mu$ and $\nu$ are automorphisms and $\delta \tilde{\varphi}=\varphi \delta $, then $\eta \psi =\psi \eta.$
\end{proof}

\begin{theorem}\label{Th1}
Let $(V,\varphi_{1})$ and $(W,\varphi_{2})$ be two finite dimensional stem Hom-Lie algebras. Then $V \sim W$ if and only if $V \cong W$.
\end{theorem}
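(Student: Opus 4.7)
The direction $V\cong W\Rightarrow V\sim W$ is essentially immediate: given a Hom-Lie isomorphism $f:V\to W$, one has $f(Z(V))=Z(W)$ and $f(V')=W'$, so $f$ descends to an isomorphism $\alpha:V/Z(V)\to W/Z(W)$ and restricts to an isomorphism $\beta:=f|_{V'}:V'\to W'$. Both paths in the square of Definition 2.1 send $(\overline{v_1},\overline{v_2})$ to $f[v_1,v_2]_1=[f(v_1),f(v_2)]_2$, so $(\alpha,\beta)$ is an isoclinism.

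The substantive content is the converse. Assume $(\alpha,\beta)$ is an isoclinism between the stem Hom-Lie algebras $V$ and $W$. The strategy is to realize both $V$ and $W$ as factor set algebras over the same underlying data $(Z(V),V/Z(V))$ and then construct an isomorphism between them via Lemma 3.6(2). Applying Lemma 3.3 to $V$ produces a multiplicative factor set $r_1$ with $V\cong R_1:=(Z(V),V/Z(V),r_1)$, while Lemma 3.4, which transports a factor set of $W$ back along the isoclinism, produces $r_2$ with $W\cong R_2:=(Z(V),V/Z(V),r_2)$. Thus it suffices to establish $R_1\cong R_2$.

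To invoke Lemma 3.6(2), take $\mu=\mathrm{id}_{V/Z(V)}$ and $\nu=\mathrm{id}_{Z(V)}$; what remains is to find a linear map $\delta:V/Z(V)\to Z(V)$ satisfying $\delta\tilde\varphi=\varphi\delta$ and
\[
\delta[\overline{v_1},\overline{v_2}]=r_2(\overline{v_1},\overline{v_2})-r_1(\overline{v_1},\overline{v_2}).
\]
The stem hypothesis $Z(V)\subseteq V'$ is essential here: it forces the image of the bracket on $V/Z(V)$ to be precisely $V'/Z(V)$, so the prescription above determines $\delta$ on $V'/Z(V)$, and one may then extend it linearly to a complementary subspace chosen $\varphi$-stable by Lemma 2.7. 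The equivariance $\delta\tilde\varphi=\varphi\delta$ on $V'/Z(V)$ is automatic from the multiplicativity of $r_1$ and $r_2$, and the extension to the complement can be arranged $\varphi$-equivariantly since $\varphi$ is bijective.

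The main obstacle is the well-definedness of $\delta$ on $V'/Z(V)$, i.e.\ showing that $r_2-r_1$ factors through the bracket. I would prove this by fixing explicit vector space sections underlying $r_1$ and $r_2$ (in the spirit of Lemmas 3.3 and 3.4) and then exploiting the two identities of Lemma 2.6 — in particular $\alpha(\overline{v})=\overline{\beta(v)}$ for $v\in V'$, which is available precisely because $V$ is stem, together with $\beta([v_1,v_2]_1)=[\beta(v_1),v_3]_2$ — to rewrite $r_2-r_1$ as an expression depending only on $[\overline{v_1},\overline{v_2}]$. Once $\delta$ has been constructed, Lemma 3.6(2) delivers the isomorphism $R_1\cong R_2$, and hence $V\cong W$.
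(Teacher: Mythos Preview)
Your proposal is correct in outline but takes a different route from the paper. Both arguments realize $V$ and $W$ as factor-set algebras $R=(Z(V),V/Z(V),r)$ and $S=(Z(V),V/Z(V),s)$ via Lemmas~3.3 and~3.4, and both finish by invoking Lemma~3.6(2). The difference lies in how $\mu,\nu,\delta$ are produced. The paper does \emph{not} take $\mu=\nu=\mathrm{id}$: it transports the original isoclinism to an isoclinism $(\alpha,\beta)$ between $R$ and $S$, extracts $\mu$ from $\alpha$ and $\nu$ from $\beta|_{Z_R}$, and then \emph{defines} $\delta$ as the first coordinate of $\beta(0,\,\cdot\,)$ restricted to $(0,V'/Z(V))\subseteq R'$. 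Because $\delta$ is defined as a component of an already-given linear map $\beta$, well-definedness is automatic, and the identity $\nu(r+\delta[\cdot,\cdot])=s(\mu\cdot,\mu\cdot)$ drops out of the isoclinism square. By contrast, you set $\mu=\nu=\mathrm{id}$ and must then check that $r_2-r_1$ factors through the bracket; your sketch via Lemma~2.6 (unwinding the sections and using $\beta[v_1,v_2]=[\beta(v_1),w]$ with $\overline{w}=\alpha(\overline{v_2})$) does accomplish this, so the argument goes through. The paper's route buys you the well-definedness of $\delta$ for free; your route avoids transporting the isoclinism but pays for it with an explicit factorization check.

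One small correction: your appeal to Lemma~2.7 for a $\varphi$-stable complement is misplaced. Lemma~2.7 concerns the decomposition $V=U\oplus Z(V)$, not a complement of $V'/Z(V)$ in $V/Z(V)$; the invariance argument there relies on the specific property of the center and does not transfer. The paper handles the extension of $\delta$ off $V'/Z(V)$ with an ad~hoc claim (``since $\varphi$ is injective, $\varphi(u)\in U$'') that is itself not fully justified, so you are in no worse shape than the original---but your citation of Lemma~2.7 is not the right one.
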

\begin{proof}
 Suppose $V \sim W$. By Lemma 3.3 and Lemma 3.4 we have, $V \cong (Z(V), \frac{V}{Z(V)}, r)=R$ and also  $W \cong (Z(V), \frac{V}{Z(V)}, s)=S$. Since $(V,\varphi_{1})$ and $(W,\varphi_{2})$ are regular, then Lemma 1.4 tells us that $(R,\psi_1)$ and $(S,\psi_2)$ are also regular. Now let $(\alpha, \beta)$ be the isoclinism between the regular Hom-Lie algebras $(R,\psi_1)$ and $(S,\psi_2)$. Certainly $Z_{R}=Z(R)$ and $Z_{S}=Z(S)$.

 Let the map $\mu \in Aut(V/Z(V))$, be defined by $\alpha((0, \overline{v})+Z_{R})= (0, \mu(\overline{v}))+Z_{S}$, for all $\overline{v} \in V/Z(V)$. Let us consider the following commutative diagram;
\begin{center}
 \begin{tikzpicture}[>=latex]
\node (A_{1}) at (0,0) {\(\frac{V}{Z(V)}\times \frac{V}{Z(V)} \)};
\node (A_{2}) at (4,0) {\(\frac{R}{Z_R}\times  \frac{R}{Z_R}\)};
\node (A_{3}) at (8,0) {\(R'\)};
\node (B_{1}) at (0,-2) {\(\frac{V}{Z(V)}\times \frac{V}{Z(V)}\)};
\node (B_{2}) at (4,-2) {\(\frac{S}{Z_S}\times \frac{S}{Z_S} \)};
\node (B_{3}) at (8,-2) {\(S'\)};

\draw[->] (A_{1}) -- (A_{2}) node[midway,above] {$\rho$};
\draw[->] (A_{2}) -- (A_{3}) node[midway,above] {$\theta$};
\draw[->] (B_{1}) -- (B_{2}) node[midway,below] {$\sigma$};
\draw[->] (B_{2}) -- (B_{3}) node[midway,below] {$\xi$};

\draw[->] (A_{1}) -- (B_{1}) node[midway,right] {$\mu^2$};
\draw[->] (A_{2}) -- (B_{2}) node[midway,right] {$\alpha^2$};
\draw[->] (A_{3}) -- (B_{3}) node[midway,right] {$\beta$};

\end{tikzpicture}
\end{center}

 in which
  \begin{equation*}
  \begin{split}
 \rho(\overline{v_1}, \overline{v_2}) = ((0, \overline{v_1})+Z_{R}, (0, \overline{v_2})+Z_{R}),\\
 \sigma(\overline{v_1}, \overline{v_2}) =  ((0, \overline{v_1})+Z_{S}, (0, \overline{v_2})+ Z_{S}),\\
\xi((x_1, \overline{v_1})+Z_{S}, (x_2,\overline{v_2})+Z_{S})= [(x_1,\overline{v_1}),(x_2, \overline{v_2})]=(s(\overline{v_1}, \overline{v_2}), [\overline{v_1}, \overline{v_2}]),\\
\theta((x_1, \overline{v_1})+Z_{R}, (x_2,\overline{v_2})+Z_{R})=(r(\overline{v_1}, \overline{v_2}), [\overline{v_1}, \overline{v_2}]).
 \end{split}
 \end{equation*}

  Again let $\nu \in Aut(Z(L))$ be defined by $\beta(x, 0)=(\nu(x),0)$, for all $x \in Z(L)$. Now for $\overline{v_1}, \overline{v_2} \in V/Z(V)$, consider
$$
\beta \theta((0, \overline{v_1})+Z_{R}, (0, \overline{v_2})+Z_{R}) = \beta[(0,\overline{v_1}),(0, \overline{v_2})]
$$
 and further \begin{align*}
 \xi \alpha((0, \overline{v_1})+Z_{R}, (0, \overline{v_2})+Z_{R})&=\xi((0, \mu(\overline{v_1}))+Z_{S}, (0, \mu(\overline{v_2}))+Z_{S} )\\
 &= [(0, \mu(\overline{v_1})), (0, \mu(\overline{v_2}))]\\
 &= (s((\mu(\overline{v_1}), \mu(\overline{v_2})), [\mu(\overline{v_1}), \mu(\overline{v_2})]).
 \end{align*}
 Hence, we have $\beta[(0,\overline{v_1}),(0, \overline{v_2})]
=(s((\mu(\overline{v_1}), \mu(\overline{v_2})), [\mu(\overline{v_1}), \mu(\overline{v_2})])$.
 Let us define the map $\delta: V'/Z(V) \longrightarrow Z(V)$ such that
$$ \beta(0, [\overline{v_1}, \overline{v_2}])=(\delta( [\overline{v_1}, \overline{v_2}]),t)$$
 
\noindent  where $t \in V/Z(V)$ and thus, we get
 \[\nu(r( \overline{a}, \overline{b})+\delta( [\overline{a}, \overline{b}])=s((\mu(\overline{a}), \mu(\overline{b})).\]
 To apply Lemma 3.6, we may extend $\delta$ to $V/Z(V)$ by defining $0$ on the complement of $V^{'}/Z(V)$ in $V/Z(V)$. Now we will show $\delta \tilde{\phi}=\phi \delta .$ We have
\[(\delta \tilde{\varphi}[\overline{v_1}, \overline{v_2}],t_{1})=\beta(0,\tilde{\varphi}[\overline{v_1}, \overline{v_2}])=\beta\psi(0,[\overline{v_1}, \overline{v_2}]).\] 
 
\noindent Further $\tilde{\varphi}$ is surjective, i.e. $\tilde{\varphi} (t)=t_2$, for any $t_2 \in V/Z(V)$. Therefore
 \[(\varphi \delta [\overline{v_1}, \overline{v_2}],t_2)= (\varphi \delta [\overline{v_1}, \overline{v_2}],\tilde{\varphi}(t))=\psi(\delta [\overline{v_1}, \overline{v_2}],t )=\psi\beta (0,[\overline{v_1}, \overline{v_2}]) .\]
 As a result $\varphi \delta(\overline{v})=\delta \tilde{\varphi}(\overline{v}) $ for all $v \in V'$. Suppose $V=V'\oplus U$, then define $\delta$ to be zero in $U$. Then $\varphi\delta (\overline{u})=0$ for all $u \in U$. Since $\varphi$ is injective, $\varphi(u) \in U$ for $u \in U$, we have 
 \[ \delta \tilde{\varphi}(\overline{u})=\delta (\varphi(u)+Z(V))=0.\]
 Thus $\delta \tilde{\phi}=\phi \delta $ and now we apply Lemma 3.6 to get our results.
\end{proof}

Now with the help of Lemmas, Theorem proved earlier up to this part of the paper we can easily prove Theorem 3.8 and Theorem 3.9. The proofs of these two Theorems are similar to the proofs done earlier in [10].

\begin{theorem}\label{Th2}
Let $\mathcal{C}$ be an isoclinism family of finite dimensional regular Hom-Lie algebras. Then any $V \in \mathcal{C}$ can be expressed as $V= T \oplus A$ where $T$ is a stem Hom-Lie algebra and $A$ is some finite dimensional abelian Hom-Lie algebra.
\end{theorem}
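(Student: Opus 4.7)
The plan is to construct an internal direct-sum decomposition $V = T \oplus A$ at the level of Hom-Lie algebras by cutting off an abelian central piece $A$ and identifying the remaining complement $T$ as a stem Hom-Lie subalgebra. The whole construction rests on producing two $\varphi$-invariant complements: one inside $Z(V)$ and one inside $V$ itself.

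First I would isolate the abelian summand. Both $V'$ and $Z(V)$ are $\varphi$-invariant (the former by multiplicativity of $\varphi$, the latter by Lemma 1.2), so their intersection $V' \cap Z(V)$ is a $\varphi$-invariant subspace of $Z(V)$. Choose a $\varphi$-invariant complement $A$ such that
\[ Z(V) = (V' \cap Z(V)) \oplus A. \]
Since $A \subseteq Z(V)$ and $\varphi(A)\subseteq A$, the triple $(A,0,\varphi|_A)$ is an abelian Hom-Lie algebra and a central ideal of $V$. This $A$ will be the abelian factor promised by the statement.

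Next I would extract the stem complement. Because $A \cap V' \subseteq A \cap (V' \cap Z(V)) = 0$, we have $V' + Z(V) = V' \oplus A$. Choose a $\varphi$-invariant complement $C$ of $V' + Z(V)$ in $V$ and set $T := V' + C$. Then $V = T \oplus A$ as vector spaces, $\varphi(T) \subseteq T$ by construction, and $[T,T] \subseteq V' \subseteq T$, making $T$ a Hom-Lie subalgebra. Since $A$ is central, $[T,A] = [A,A] = 0$, and $\varphi$ respects the splitting, so this vector-space decomposition upgrades to an isomorphism $V \cong T \oplus A$ of Hom-Lie algebras. Finally $T' = [T,T] = [V,V] = V'$ and
\[ Z(T) = T \cap Z(V) = V' \cap Z(V) \subseteq V' = T', \]
so $T$ is a stem Hom-Lie algebra, which completes the argument.

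The main obstacle will be guaranteeing the existence of the two $\varphi$-invariant complements used above: for an arbitrary linear operator, an invariant subspace of an invariant ambient space need not admit an invariant complement (a single nontrivial Jordan block already obstructs this). In the present regular Hom-Lie setting one can overcome this by decomposing $Z(V)$ and $V$ into primary $\varphi$-components and picking complements within each component, or by averaging a chosen projection against iterates of $\varphi$ on finite-order components, exploiting the same kind of rigidity of $\varphi$ that underlies Lemma 2.7. Once these invariant complements are in place, all remaining verifications are the short linear-algebra checks indicated above.
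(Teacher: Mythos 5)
The paper itself gives no proof of this theorem (it defers to the Lie-superalgebra argument in [10]), and your skeleton is exactly that standard Moneyhun-style argument: split $Z(V)=(V'\cap Z(V))\oplus A$, put $T=V'\oplus C$ for a complement $C$ of $V'\oplus A$, and check $Z(T)=V'\cap Z(V)\subseteq T'$. All of your linear-algebra verifications ($V'+Z(V)=V'\oplus A$, $T'=V'$, $Z(T)=T\cap Z(V)=V'\cap Z(V)$) are correct. The whole weight of the proof therefore rests on the one point you flag yourself: the existence of $\varphi$-invariant complements. That is where the genuine gap is, and your proposed remedies do not close it. Primary decomposition of $\varphi$ does not help, because inside a single primary component a $\varphi$-invariant subspace still need not have a $\varphi$-invariant complement (the Jordan block $J_2(1)$ you mention lives entirely in one primary component), and averaging over iterates of $\varphi$ only works when $\varphi$ generates a finite group, which regularity does not guarantee.

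Worse, the obstruction is not an artifact of your method: it can actually occur. Take $V=\langle x,y,z,w\rangle$ with $[x,y]=z$ and all other brackets zero, and $\varphi$ the identity except $\varphi(w)=w+z$. This $\varphi$ is bijective and multiplicative, and Hom--Jacobi holds since all brackets land in the centre; so $(V,\varphi)$ is a finite-dimensional regular Hom-Lie algebra. Here $Z(V)=\langle z,w\rangle$, $V'=\langle z\rangle$, and $\varphi|_{Z(V)}$ is a single Jordan block, so the only $\varphi$-invariant subspaces of $Z(V)$ are $0$, $\langle z\rangle$ and $Z(V)$ itself: the invariant complement $A$ of $V'\cap Z(V)$ in $Z(V)$ that your construction (and the theorem) requires simply does not exist, and since any abelian direct summand $A$ with $[T,A]=0$ must be a $\varphi$-invariant central subspace meeting $V'=T'$ trivially, no decomposition $V=T\oplus A$ with $T$ stem is possible at all. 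So the gap cannot be repaired without an additional hypothesis (e.g.\ semisimplicity of $\varphi$, or the existence of the invariant complements). The root of the trouble in the paper is Lemma 2.7, whose proof only establishes $\varphi(U)\cap Z(V)=0$ rather than $\varphi(U)\subseteq U$; your instinct to lean on ``the same kind of rigidity that underlies Lemma 2.7'' inherits that defect.
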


\begin{theorem}\label{Th3}
If $(V,\varphi_1)$ and $(W,\varphi_2)$ be two regular Hom-Lie algebras with same dimension. Then $V \sim W$ if and only if $V \cong W$.
\end{theorem}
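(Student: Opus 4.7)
The plan is to prove the two directions separately, with the bulk of the work lying in the implication $V \sim W \Rightarrow V \cong W$. The reverse direction is straightforward: any Hom-Lie algebra isomorphism $f:V\to W$ necessarily sends $Z(V)$ onto $Z(W)$ (since $f$ commutes with the twists $\varphi_1,\varphi_2$ and the brackets) and restricts to an isomorphism $V'\to W'$, so it induces $\alpha:V/Z(V)\to W/Z(W)$ and $\beta:V'\to W'$ making the square of Definition 2.1 commute, whence $V\sim W$.

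For the nontrivial direction, my strategy is to reduce to the stem case already handled by Theorem 3.7. First apply Theorem 3.8 to write $V=T_1\oplus A_1$ and $W=T_2\oplus A_2$, where $T_1,T_2$ are stem Hom-Lie algebras and $A_1,A_2$ are finite-dimensional abelian Hom-Lie algebras. By Lemma 2.2, $V\sim T_1$ and $W\sim T_2$, and since isoclinism is an equivalence relation, the hypothesis $V\sim W$ forces $T_1\sim T_2$. Theorem 3.7 then yields $T_1\cong T_2$, and in particular $\dim T_1=\dim T_2$. Combined with the hypothesis $\dim V=\dim W$, this gives $\dim A_1=\dim A_2$.

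The final step is to assemble a Hom-Lie algebra isomorphism $V\to W$ from the already-constructed isomorphism $T_1\cong T_2$ together with a matching of the abelian parts. \textbf{This is the step I expect to be the main obstacle}: two abelian Hom-Lie algebras of the same dimension need not be isomorphic, because their twists $\varphi|_{A_i}$, although both bijective, may be inequivalent as linear maps. To resolve this, one exploits the freedom in the choice of abelian complement provided by Lemma 2.7 and the regularity of $\varphi_1,\varphi_2$. Concretely, following the template in [10], one shows that after possibly absorbing a finite-dimensional abelian summand into the central part of the stem decomposition on each side, the two abelian complements can be normalized to carry conjugate twists, yielding $A_1\cong A_2$. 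The direct sum of this isomorphism with the isomorphism $T_1\cong T_2$ obtained above produces the desired isomorphism $V\cong W$.
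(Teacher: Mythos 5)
Your overall architecture is the intended one: the paper gives no proof of this theorem, stating only that it follows the pattern of [10], and the route you describe --- the easy direction from the fact that an isomorphism carries $Z(V)$ onto $Z(W)$ and $V'$ onto $W'$; the hard direction by decomposing $V=T_1\oplus A_1$ and $W=T_2\oplus A_2$ via Theorem 3.8, using Lemma 2.2 and transitivity of isoclinism to get $T_1\sim T_2$, then Theorem 3.7 to get $T_1\cong T_2$ and hence $\dim A_1=\dim A_2$ --- is exactly that pattern. Up to that point the argument is sound.

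The gap is the step you yourself flag, and it is not merely a technical loose end: with the paper's Definition 1.3 (which requires $f\circ\varphi_1=\varphi_2\circ f$), two abelian regular Hom-Lie algebras of the same dimension are isomorphic precisely when their twists are similar linear maps, and nothing in the isoclinism hypothesis controls this. Concretely, take $V=W=\mathbb{F}$ with zero bracket, $\varphi_1=\mathrm{id}$ and $\varphi_2=2\,\mathrm{id}$. Both are regular, both satisfy $V=Z(V)$ and $V'=0$, so they are trivially isoclinic and of equal dimension; yet a morphism $f$ must satisfy $f=2f$, i.e.\ $f=0$, so they are not isomorphic. Here $T_1=T_2=0$, so there is no stem summand to absorb anything into and no freedom left to ``normalize the twists to be conjugate'': your proposed normalization cannot succeed in general. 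So the proposal as written does not close the argument --- and indeed it cannot, because the statement is false under Definition 1.3 unless either the notion of isomorphism is weakened on the abelian part (dropping compatibility with $\varphi$ there) or an extra hypothesis forcing the induced twists on the abelian complements to be conjugate is added. In the classical settings of [5,10] this final step is genuinely trivial, since abelian Lie (super)algebras of equal dimension are automatically isomorphic; that is precisely why the template transfers everywhere except here, and why locating the obstacle, as you did, is the right instinct even though the resolution you sketch does not exist.
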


 Below is an example which shows that, two isoclinic regular Hom-Lie algebras of different dimensions may not be isomorphic.

\begin{example}
Let $(V, \varphi_1)$ be a 3 dimensional regular Hom-Lie algebra with the basis $\{v_{1},v_{2},v_{3}\}$ and the skew-bilinear map is defined by;
\[[v_{1},v_{2}]=v_{2},~~[v_{1},v_{3}]=v_{3},\]
and all other commutator relations are zero. The linear map $\varphi_1$ is defined as
\[\varphi_{1}(v_1)=v_1,~\varphi_{1}(v_2)=v_3,~\varphi_{1}(v_3)=v_2 .\]
 Then $V'=<v_{2},v_{3}>$ and $Z(V)=0$ and hence, $V/Z(V) \cong V$.\\

Now, let $(W, \varphi_2)$ be a 4 dimensional regular Hom-Lie algebra with the basis $\{w_{1},w_{2},w_{3},w_{4}\}$ and  the commutator relations are defined by;
\[[w_{1},w_{2}]=w_{2},~~[w_{1},w_{3}]=w_{3},\]
and all other commutator relations are zero. Also the linear map is given by
\[\varphi_{2}(w_1)=w_1,~\varphi_{2}(w_2)=w_3,~\varphi_{2}(w_3)=w_2,~\varphi_{2}(w_4)=w_4 .\]
 Then $W'=<w_{2},w_{3}>$ and $Z(W)=\{w_{4}\}$ and hence, $W/Z(W)=\{ \overline{w_{1}},\overline{w_{2}},\overline{w_{3}}\}$, where $\overline{w_{i}}=w_{i}+Z(W)$, for $i=1,2,3$.\\

It is easy to verify that $V' \cong W'$ and $V/Z(V) \cong W/Z(W)$. Hence, one can deduce $V\sim W$ while $dim(V)\neq dim(W)$.
\end{example}

\end{document}